\theoremstyle{plain}
\newtheorem{theorem}{Theorem}[section]
\newtheorem{proposition}[theorem]{Proposition}
\theoremstyle{definition}
\newtheorem{remark}[theorem]{Remark}
\newcommand{\N}{\mathbb N}	
\newcommand{\sgn}{\mathrm{sgn}}
\newcommand{\Z}{\mathbb Z}	
\newcommand{\E}{\mathbb E}	
\renewcommand{\P}{\mathbb P}	
\newcommand{\G}{\mathcal G}
\newtheorem{conjecture}[theorem]{Conjecture}
\begin{document}

\begin{flushleft}
\LARGE \textbf{Corner percolation with preferential directions}\\
\vspace{0,5cm}
\large \textbf{Régine Marchand, Irène Marcovici and Pierrick Siest}
\end{flushleft}

\begin{flushleft}

Université de Lorraine, IECL\\
54000 Nancy\\
E-mail adress: \texttt{regine.marchand@univ-lorraine.fr, irene.marcovici@univ-lorraine.fr, pierrick.siest@univ-lorraine.fr}\\
\vspace{0,5cm}

\end{flushleft}

\noindent \textbf{Abstract.} Corner percolation is a dependent bond percolation model on ${\mathbb Z}^2$ introduced by B\'alint T\'oth, in which each vertex has exactly two incident edges, perpendicular to each other. G\'abor Pete has proven in 2008 that under the maximal entropy probability measure, all connected components are finite cycles almost surely. We consider here a regime where West and North directions are preferred with probability $p$ and $q$ respectively, with $(p,q)\ne(\frac{1}{2},\frac{1}{2})$. We prove that there exists almost surely an infinite number of infinite connected components, which are in fact infinite paths. Furthermore, they all have the same asymptotic slope $\frac{2q-1}{1-2p}$.

\section{Introduction}

Various constrained percolation models on $\Z^2$ have been studied, including models with restrictions on the degree of each vertex, see for example~\cite{PercoEulerienne} or~\cite{ConstrainedDegreePercoSidoravicius}. In the present work, we study an even more constrained percolation model, called \emph{corner percolation}, where each vertex has exactly two incident open edges, perpendicular to each other. It was introduced by B\'alint T\'oth, and studied by G\'abor Pete in the maximal entropy regime~\cite{Corner}. 

Corner percolation configurations are also known under the name of \emph{hitomezashi} design, by analogy with a Japanese style of embroidery called \emph{sashiko}, that creates patterns satisfying the same constraints, see Figure \ref{fig:config}. Since at each vertex, there is exactly one horizontal and one vertical open edge, the components are either finite cycles or bi-infinite paths, made in both case of a perfect alternation of horizontal and vertical edges. The mathematical properties of hitomezashi loops have been recently investigated and still raise many questions. In particular, it was proven that their length is always congruent to $4$ modulo $8$, but the proof is quite intricate, and the enumeration of hitomezashi loops according to their length is still an open question~\cite{Hitomezashi}.

Although they exhibit a rich combinatorial structure, corner percolation configurations can be created by a very simple procedure. Indeed, in order to satisfy the constraint, there must be on each horizontal line a perfect alternation of open and closed edges, and the same holds for vertical lines. Consequently, a configuration can be parametrized by two binary sequences specifying, for each horizontal and vertical line, which edges are open. A natural question is then to ask whether there exists an infinite connected component, depending on the probability distributions of the two binary sequences. If the choice is made uniformly at random and independently for each horizontal and vertical line, it can be interpreted as the maximal entropy probability measure on all configurations satisfying the constraints. G\'abor Pete treated this case and proved that under the maximal entropy probability measure, all connected components are finite cycles almost surely~\cite{Corner}. 

Let us give an alternative interpretation of the maximal entropy probability measure. We consider that each horizontal line of the grid $\Z^2$ is a one-way road that can be either oriented to the East or to the West with probability $\frac{1}{2}$, and the same for vertical lines with North and South. This orientation is fixed once and for all. We start from the origin of the grid and follow the horizontal road according to its orientation. At each corner, we turn left or right according to the direction of the road encountered, see Figure \ref{fig:roads}. 
This process describes a deterministic walk in a random environment, and it can be seen that the path taken follows the component of the origin in a corner percolation configuration distributed under the maximal entropy probability measure. Consequently, Pete's result ensures that the trajectory necessarily comes back where it started.

\begin{figure}[h]

\begin{minipage}[c]{0.42\linewidth}
    \centering
    \includegraphics[width=6cm,height=6cm]{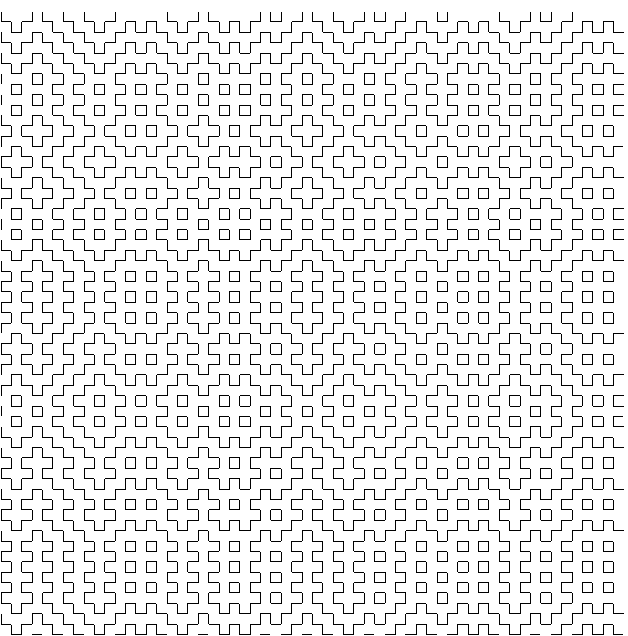}
    \caption{A configuration of corner percolation.}
    \label{fig:config}
\end{minipage}
\hfill
\begin{minipage}[c]{.48\linewidth}
    \centering
    
    \begin{tikzpicture}[scale=0.6]
    
    \draw[black, thin, ->] (0,-4) -- (0,4) node[anchor=west]{$y$};
    \draw[black, thin, ->] (-4,0) -- (4,0) node[anchor=west]{$x$};
    
    \draw[blue, thin,<-] (-4,1) -- (-3.5,1) node[anchor=west]{};
    \draw[blue, thin,->] (-4,2) -- (-3.5,2)
    node[anchor=west]{};
    \draw[blue, thin,->] (-4,3) -- (-3.5,3)
    node[anchor=west]{};
    \draw[blue, thin,->] (-4,-1) -- (-3.5,-1) node[anchor=west]{};
    \draw[blue, thin,->] (-4,0) -- (-3.5,0) node[anchor=west]{};
    \draw[blue, thin,<-] (-4,-2) -- (-3.5,-2) node[anchor=west]{};
    \draw[blue, thin,<-] (-4,-3) -- (-3.5,-3) node[anchor=west]{};
    \draw[blue, thin,<-] (-4,-4) -- (-3.5,-4) node[anchor=west]{};
    
    \draw[green, thin,<-] (1,-5) -- (1,-4.5) node[anchor=west]{};
    \draw[green, thin,->] (2,-5) -- (2,-4.5) node[anchor=west]{};
    \draw[green, thin,->] (-1,-5) -- (-1,-4.5) node[anchor=west]{};
    \draw[green, thin,<-] (0,-5) -- (0,-4.5) node[anchor=west]{};
    \draw[green, thin,->] (-2,-5) -- (-2,-4.5) node[anchor=west]{};
    \draw[green, thin,->] (-3,-5) -- (-3,-4.5) node[anchor=west]{};
    \draw[green, thin,<-] (3,-5) -- (3,-4.5) node[anchor=west]{};
    \draw[green, thin,<-] (4,-5) -- (4,-4.5) node[anchor=west]{};

    \filldraw[red] (0,0) circle (2pt) node[anchor=north ]{$(0,0)$};
    
    \draw[red, thin, ->] (0,0) -- (1,0) node[anchor=west]{};
    
    \draw[red, thin] (1,0) -- (1,-1) node[anchor=west]{};
    
    \draw[red, thin] (1,-1) -- (2,-1) node[anchor=west]{};
    
    \draw[red, thin] (2,-1) -- (2,0) node[anchor=west]{};
    
    \draw[red, thin] (2,0) -- (3,0) node[anchor=west]{};
    
    \draw[red, thin] (3,0) -- (3,-1) node[anchor=west]{};
    
    \draw[red, thin] (3,-1) -- (4,-1) node[anchor=west]{};
        
    \draw[red, thin] (4,-1) -- (4,-2) node[anchor=west]{};
            
    \draw[red, thin] (4,-2) -- (3,-2) node[anchor=west]{};
                
    \draw[red, thin] (3,-2) -- (3,-3) node[anchor=west]{};
                    
    \draw[red, thin] (3,-3) -- (2,-3) node[anchor=west]{};
                        
    \draw[red, thin] (2,-3) -- (2,-2) node[anchor=west]{};
    
    \draw[red, thin] (2,-2) -- (1,-2) node[anchor=west]{};
    
    \draw[red, thin] (1,-2) -- (1,-3) node[anchor=west]{};
    
    \draw[red, thin] (1,-3) -- (0,-3) node[anchor=west]{};
    
    \draw[red, thin] (0,-3) -- (0,-4) node[anchor=west]{};
    
    \draw[red, thin] (0,-4) -- (-1,-4) node[anchor=west]{};
    
    \draw[red, thin] (-1,-4) -- (-1,-3) node[anchor=west]{};
    
    \draw[red, thin] (-1,-3) -- (-2,-3) node[anchor=west]{};
    
    \draw[red, thin] (-2,-3) -- (-2,-2) node[anchor=west]{};
    
    \draw[red, thin] (-2,-2) -- (-3,-2) node[anchor=west]{};
    
    \draw[red, thin] (-3,-2) -- (-3,-1) node[anchor=west]{};
    
    \draw[red, thin] (-3,-1) -- (-2,-1) node[anchor=west]{};
    
    \draw[red, thin] (-2,-1) -- (-2,0) node[anchor=west]{};
    
    \draw[red, thin] (-2,0) -- (-1,0) node[anchor=west]{};
    
    \draw[red, thin] (-1,0) -- (-1,1) node[anchor=west]{};
    
    \draw[red, thin] (-1,1) -- (-2,1) node[anchor=west]{};
    
    \draw[red, thin] (-2,1) -- (-2,2) node[anchor=west]{};
    
    \draw[red, thin] (-2,2) -- (-1,2) node[anchor=west]{};
    
    \draw[red, thin] (-1,2) -- (-1,3) node[anchor=west]{};
    
    \draw[red, thin] (-1,3) -- (0,3) node[anchor=west]{};
    
    \draw[red, thin] (0,3) -- (0,2) node[anchor=west]{};
    
    \draw[red, thin] (0,2) -- (1,2) node[anchor=west]{};
    
    \draw[red, thin] (1,2) -- (1,1) node[anchor=west]{};
    
    \draw[red, thin] (1,1) -- (0,1) node[anchor=west]{};
    
    \draw[red, thin] (0,1) -- (0,0) node[anchor=west]{};
                
    \end{tikzpicture}
    \caption{An instance of a finite trajectory for the deterministic walk starting from the origin, in an environment giving the directions of the roads. }
    \label{fig:roads}
\end{minipage}
\end{figure}

This point of view leads us to consider a case where some directions can be preferred: we extend this model by considering that each vertical road is oriented to the North with probability $q$, and that each horizontal road is oriented to the West with probability $p$. We prove the following:

\begin{theorem} \label{PremierResultat}
If $(p,q)\ne(\frac{1}{2},\frac{1}{2})$, then with positive probability the path starting from the origin never goes back to its starting point. Moreover, there almost surely exists an infinite number of infinite paths, and they all have a same asymptotic slope, which is equal to~$\frac{2q-1}{1-2p}$ (and if $p=1/2$, the asymptotic slope is infinite).
\end{theorem}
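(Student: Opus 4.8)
The plan is to exploit the \emph{deterministic walk in a random environment} description given in the introduction. I encode the environment by two independent families of i.i.d.\ signs: for each row $n\in\mathbb{Z}$ a sign $s_n$ equal to $+1$ (East) with probability $1-p$ and $-1$ (West) with probability $p$, and for each column $m\in\mathbb{Z}$ a sign $t_m$ equal to $+1$ (North) with probability $q$ and $-1$ (South) with probability $1-q$. The walk alternates horizontal and vertical unit steps, a horizontal step on row $n$ being $s_n$ and a vertical step on column $m$ being $t_m$. To these families I attach the two independent one-dimensional random walks $B_n=\sum_{i=1}^{n}s_i$ and $A_m=\sum_{j=1}^{m}t_j$ (extended to negative indices in the usual way), whose drifts are $\mathbb{E}[s]=1-2p$ and $\mathbb{E}[t]=2q-1$. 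Using the four reflection symmetries ($p\mapsto 1-p$, $q\mapsto 1-q$) and the $90^\circ$ rotation (which exchanges the roles of $A$ and $B$), I may assume $p\neq\frac12$, so that $B$ is genuinely transient; the degenerate case $p=\frac12$, $q\neq\frac12$ then follows by rotation and yields the infinite slope.

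The backbone of the proof is a structural lemma, generalising Pete's encoding, expressing the component of a vertex in terms of excursions of $A$ and $B$: the horizontal fluctuations of the path are governed by $B$ and the vertical ones by $A$, and the component closes into a finite cycle only when the two relevant excursions both return. When $(p,q)\neq(\tfrac12,\tfrac12)$ at least one of the walks has nonzero drift and is therefore transient, so with positive probability the relevant excursion never returns; on that event the path from the origin escapes to infinity, which is exactly the first assertion $\mathbb{P}(\text{the origin lies on an infinite path})>0$. Since the law of the environment is a product of Bernoulli measures, it is ergodic (indeed mixing) under the $\mathbb{Z}^2$-action by translations, and the event ``there exists an infinite path'' is translation invariant; hence it has probability one.

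For the common slope I would use the environment seen from the walker, and this is where the main obstacle lies: the walk is \emph{not} a sum of independent increments, since every horizontal step taken on a given row uses the same sign $s_n$, so each revisit of a row contributes correlated backtracking, and similarly for columns. The whole point of the two-walk encoding is to show that these revisits are rare enough — the backtracking caused by a row being controlled by the local time that the vertical walk $A$ spends at the corresponding level — to be of lower order than the ballistic drift. Equivalently, the reversibility built into corner percolation provides an invariant, ergodic law for the environment viewed from the current vertex, under which the successive orientations encountered form a stationary ergodic sequence with means $1-2p$ and $2q-1$. Writing the position after $2N$ steps as $X_{2N}=\sum_{j}s_{Y(j)}$ and $Y_{2N}=\sum_{j}t_{X(j)}$ over the $N$ horizontal, resp.\ vertical, steps, Birkhoff's theorem then yields $X_{2N}/N\to 1-2p$ and $Y_{2N}/N\to 2q-1$ almost surely, so the asymptotic slope is $\frac{2q-1}{1-2p}$; being a deterministic constant it is shared by every infinite path, and since $(p,q)\neq(\tfrac12,\tfrac12)$ the speed vector $(1-2p,2q-1)$ is nonzero, so every infinite path is ballistic.

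Finally, I pass from one to infinitely many paths. Being ballistic with nonzero speed, any single infinite path $P_\cdot$ satisfies $\|P_k\|\geq c|k|$ for $|k|$ large, so it meets a box $[-L,L]^2$ in only $O(L)$ vertices. The number of infinite paths is itself a translation-invariant quantity, hence almost surely equal to a constant in $\{0,1,2,\dots,\infty\}$, and the value $0$ has already been excluded. If this constant were a finite $k\geq 1$, the $k$ paths would cover only $O(L)$ vertices of the box, so the density of vertices lying on an infinite path would be zero; this contradicts the ergodic theorem, by which that density equals $\mathbb{P}(\text{origin lies on an infinite path})>0$. Therefore there are almost surely infinitely many infinite paths. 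With transience, ergodicity, and this density count being essentially soft, the technical heart of the argument remains making the correspondence between the loop structure and the excursions of $A$ and $B$ explicit, and proving that the row/column backtracking is negligible against the ballistic drift.
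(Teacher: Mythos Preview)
Your proposal has a genuine gap in the slope argument, and the paper's route is quite different and worth knowing.

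\medskip
\textbf{The gap.} For the asymptotic slope you invoke an ``invariant, ergodic law for the environment viewed from the current vertex'' so that Birkhoff applied to the sign sequence along the path gives $X_{2N}/N\to 1-2p$, $Y_{2N}/N\to 2q-1$. Stationarity of that sign process is indeed provable (the paper does it in Proposition~\ref{statio}), but its \emph{ergodicity} is not: the paper states it explicitly as an open problem (Conjecture~\ref{ConjErgo}). Without ergodicity the Birkhoff averages need not be constants, and your slope computation collapses. The ``reversibility'' you allude to does not obviously yield ergodicity here; the difficulty is precisely the one you flag---revisits of rows and columns create long-range dependence along the path---and you have not shown how to control it. Your ``structural lemma'' that the component closes iff both excursions return is also asserted rather than proved; the actual picture is that many distinct cycles and paths can share the same level, so transience of one walk does not immediately force the component of the origin to be infinite.

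\medskip
\textbf{How the paper proceeds instead.} The key object is the height function
\[
H\Bigl(n+\tfrac12,m+\tfrac12\Bigr)=\Bigl\lceil \tfrac{X_n+Y_m}{2}\Bigr\rceil,
\]
which is constant along each side of a path (its ``level''). For the slope (Theorem~\ref{directionofpath}) the argument is purely geometric: the path through the origin has level $0$ or $-1$, while the law of large numbers for $X$ and $Y$ forces $|H|>1$ outside a disc together with a thin cone around the line orthogonal to $(2p-1,2q-1)$. Hence the path is trapped in that cone, which gives the slope without ever analysing the dynamics along the path. For existence (Theorem~\ref{existinftypath}) the paper builds, via the same height function, an explicit finite-energy event on which the component of the origin cannot close up, and then uses Birkhoff along an even direction to find infinitely many translates of that event with pairwise distinct levels, hence disjoint infinite paths.

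\medskip
\textbf{What is salvageable in your approach.} Your density argument for passing from one infinite path to infinitely many is correct and is a genuine alternative to the paper's ``different levels'' trick: once you know each infinite path is confined to a cone (hence meets $[-L,L]^2$ in $O(L)$ vertices), a finite number of paths would give density zero, contradicting $\P(\text{origin on an infinite path})>0$ via the ergodic theorem. But note that this still requires the cone confinement, which the paper gets from the height function, not from your environment-seen-from-the-walker argument.
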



This result  will be proved in two steps, Theorem \ref{existinftypath} and Theorem \ref{directionofpath}.
In the next section, we define the model more formally and we prove its ergodicity.

\section{Definition of the model and ergodicity}

\subsection{Definition and notations}

Let us denote by $(\Z^2,\mathbb{E}^2)$ the two-dimensional grid, that is, the graph whose set of vertices is $\Z^2$ and whose set of edges is $\mathbb{E}^2=\{\{x,x+e_1\}: x\in\Z^2\}\cup \{\{x,x+e_2\}: x\in\Z^2\}$, where $e_1=(1,0)$ and $e_2=(0,1)$.

Let $\G$ be the set of subgraphs $G=(\Z^2,\mathcal{E})$ of $(\Z^2,\mathbb{E}^2)$ such that each vertex has exactly one horizontal and one vertical adjacent edge, that is: for any $x\in\Z^2$, exactly one of the edges among $\{x,x+e_1\}$ and $\{x,x-e_1\}$ belongs to $\mathcal{E}$, and exactly one of the edges among $\{x,x+e_2\}$ and $\{x,x-e_2\}$ belongs to $\mathcal{E}$. Note that this implies that connected components are all infinite simple paths or finite circuits. 



Let us introduce the following sets of \emph{even} and \emph{odd} elements of $\Z^2$:
\begin{align*}
&\Z^2_e =  \{(x_1,x_2) \in \mathbb Z^2:  \, x_1+x_2 \in 2\Z\},\\
&\Z^2_o =  \{(x_1,x_2) \in \mathbb Z^2:  \, x_1+x_2 \in 1+2\Z\}.
\end{align*}
Let $\Omega=\{-1,1\}^{\Z}\times \{-1,1\}^{\Z}$. Our notations are inspired by those of Pete. We define a map $X:\Omega\to\G$ as follows. For $\omega=(\xi,\eta)\in\Omega$, the graph $X(\omega)=(\Z^2,\mathcal{E})$ is defined this way (see Figure \ref{ConstructionModel}): $\mathcal{E}$ is the set of open edges, and for all $i\in\mathbb{Z}$,

\begin{itemize}
    \item if $\xi(i)=1$, then for all $k\in\mathbb{Z}$, the vertical edge $\{(i,k),(i,k)+e_2\}$ is open if and only if $(i,k)\in \mathbb{Z}^2_o$. If $\xi(i)=0$, then for all $k\in\mathbb{Z}$, the vertical edge $\{(i,k),(i,k)+e_2\}$ is open if and only if $(i,k)\in \mathbb{Z}^2_e$;

    \item  if $\eta(i)=1$, then for all $k\in\mathbb{Z}$, the horizontal edge $\{(k,i),(k,i)+e_1\}$ is open if and only if $(k,i)\in \mathbb{Z}^2_o$. If $\eta(i)=0$, then for all $k\in\mathbb{Z}$, the horizontal edge $\{(k,i),(k,i)+e_1\}$ is open if and only if $(k,i)\in \mathbb{Z}^2_e$.
\end{itemize}

\begin{figure}[h]
    \centering
    \begin{tikzpicture}[scale=0.6]
    \draw[black, thick, ->] (0,-4) -- (0,5) node[anchor=west]{$y$};
    \draw[black, thick, ->] (-5,0) -- (5,0) node[anchor=west]{$x$};
    \filldraw[black] (-4,2) circle (0pt) node[anchor=east]{0};
    \filldraw[black] (1,-4) circle (0pt) node[anchor=south]{1};
    \filldraw[black] (-3,-4) circle (0pt) node[anchor=south]{0};
    \filldraw[black] (-4,1) circle (0pt) node[anchor=east]{1};
    
    \draw[blue, thick] (1,-2) -- (1,-1);
        \draw[blue, thick] (1,0) -- (1,1);
            \draw[blue, thick] (1,2) -- (1,3);
    \draw[blue, thick] (-4,2) -- (-3,2);
        \draw[blue, thick] (-2,2) -- (-1,2);
            \draw[blue, thick] (0,2) -- (1,2);
                \draw[blue, thick] (2,2) -- (3,2);
    \draw[blue, thick] (-3,4) -- (-3,3);
        \draw[blue, thick] (-3,2) -- (-3,1);
            \draw[blue, thick] (-3,0) -- (-3,-1);
                \draw[blue, thick] (-3,-2) -- (-3,-3);
    \draw[blue, thick] (-4,1) -- (-3,1);
        \draw[blue, thick] (-2,1) -- (-1,1);
            \draw[blue, thick] (0,1) -- (1,1);
                \draw[blue, thick] (2,1) -- (3,1);

    \filldraw[red] (-3,-2) circle (1.5pt);
        \filldraw[red] (-3,0) circle (1.5pt);
            \filldraw[red] (-3,2) circle (1.5pt);
                \filldraw[red] (-3,4) circle (1.5pt);
    \filldraw[red] (-1,-2) circle (1.5pt);
        \filldraw[red] (-1,0) circle (1.5pt);
            \filldraw[red] (-1,2) circle (1.5pt);
                \filldraw[red] (-1,4) circle (1.5pt);
    \filldraw[red] (1,-2) circle (1.5pt);
        \filldraw[red] (1,0) circle (1.5pt);
            \filldraw[red] (1,2) circle (1.5pt);
                \filldraw[red] (1,4) circle (1.5pt);
    \filldraw[red] (3,-2) circle (1.5pt);
        \filldraw[red] (3,0) circle (1.5pt);
            \filldraw[red] (3,2) circle (1.5pt);
                \filldraw[red] (3,4) circle (1.5pt);
                
    \filldraw[red] (-4,-3) circle (1.5pt);
        \filldraw[red] (-4,-1) circle (1.5pt);
            \filldraw[red] (-4,1) circle (1.5pt);
                \filldraw[red] (-4,3) circle (1.5pt);            
    \filldraw[red] (-2,-3) circle (1.5pt);
        \filldraw[red] (-2,-1) circle (1.5pt);
                \filldraw[red] (-2,1) circle (1.5pt);
                    \filldraw[red] (-2,3) circle (1.5pt);
    \filldraw[red] (0,-3) circle (1.5pt);
        \filldraw[red] (0,-1) circle (1.5pt);
            \filldraw[red] (0,1) circle (1.5pt);
                \filldraw[red] (0,3) circle (1.5pt);
    \filldraw[red] (2,-3) circle (1.5pt);
        \filldraw[red] (2,-1) circle (1.5pt);
            \filldraw[red] (2,1) circle (1.5pt);
                \filldraw[red] (2,3) circle (1.5pt);
    \filldraw[red] (4,-3) circle (1.5pt);
        \filldraw[red] (4,-1) circle (1.5pt);
            \filldraw[red] (4,1) circle (1.5pt);
                \filldraw[red] (4,3) circle (1.5pt);
                
    \filldraw[gray] (-4,-2) circle (1.5pt);
        \filldraw[gray] (-4,0) circle (1.5pt);
            \filldraw[gray] (-4,2) circle (1.5pt);
                \filldraw[gray] (-4,4) circle (1.5pt);
    \filldraw[gray] (-2,-2) circle (1.5pt);
        \filldraw[gray] (-2,0) circle (1.5pt);
            \filldraw[gray] (-2,2) circle (1.5pt);
                \filldraw[gray] (-2,4) circle (1.5pt);
    \filldraw[gray] (0,-2) circle (1.5pt);
        \filldraw[gray] (0,0) circle (1.5pt);
            \filldraw[gray] (0,2) circle (1.5pt);
                \filldraw[gray] (0,4) circle (1.5pt);
    \filldraw[gray] (2,-2) circle (1.5pt);
        \filldraw[gray] (2,0) circle (1.5pt);
            \filldraw[gray] (2,2) circle (1.5pt);
                \filldraw[gray] (2,4) circle (1.5pt);
    \filldraw[gray] (4,-2) circle (1.5pt);
        \filldraw[gray] (4,0) circle (1.5pt);
            \filldraw[gray] (4,2) circle (1.5pt);
                \filldraw[gray] (4,4) circle (1.5pt);
                
    \filldraw[gray] (-3,-3) circle (1.5pt);
        \filldraw[gray] (-3,-1) circle (1.5pt);
            \filldraw[gray] (-3,1) circle (1.5pt);
                \filldraw[gray] (-3,3) circle (1.5pt);
    \filldraw[gray] (-1,-3) circle (1.5pt);
        \filldraw[gray] (-1,-1) circle (1.5pt);
            \filldraw[gray] (-1,1) circle (1.5pt);
                \filldraw[gray] (-1,3) circle (1.5pt);
    \filldraw[gray] (1,-3) circle (1.5pt);
        \filldraw[gray] (1,-1) circle (1.5pt);
            \filldraw[gray] (1,1) circle (1.5pt);
                \filldraw[gray] (1,3) circle (1.5pt);
    \filldraw[gray] (3,-3) circle (1.5pt);
        \filldraw[gray] (3,-1) circle (1.5pt);
            \filldraw[gray] (3,1) circle (1.5pt);
                \filldraw[gray] (3,3) circle (1.5pt);
                
    \end{tikzpicture}
    \caption{Construction of some open edges knowing that $\xi(1)=1$, $\xi(-3)=0$, $\eta(1)=1$ and $\eta(2)=0$. Points of $\mathbb{Z}_e^2$ are colored in gray, points of $\mathbb{Z}_o^2$ in red.}
    \label{ConstructionModel}
\end{figure}

By construction, $X$ takes values in $\G$. Furthermore, the map $X:\Omega \to \G$ is bijective. For $p,q\in ]0,1[$, we introduce the probability 
$$\mu^{p,q}=(q\delta_1+(1-q)\delta_{-1})^{\otimes \mathbb{Z}}\otimes (p\delta_1+(1-p)\delta_{-1})^{\otimes \mathbb{Z}}$$
on $\Omega$ (with the product $\sigma$-algebra). We denote by $\mathbb{P}^{p,q}$ the image of $\mu^{p,q}$ by $X$, and we call $\mathbb{P}^{p,q}$ the \emph{corner percolation model} with parameters $(p,q)$. 
When there is no ambiguity on the parameters, we write $\mu$ (respectively $\P$) instead of $\mu^{p,q}$ (respectively $\P^{p,q}$). In the special case $q=p=\frac{1}{2}$, this distribution can be interpreted as the uniform distribution on $\G$. In this work, we are interested in the properties of random graphs of~$\G$ distributed according to  $\mathbb{P}^{p,q}$, and more specifically in the existence and properties of infinite paths. 

\subsection{Invariance by even translations and ergodicity}

For $v\in \mathbb Z^2$, the translation $\tau_{v}$ is the map: $\tau_{v}:\G\to\G$ defined, for $G=(\Z^2,\mathcal{E})\in\G$, by:
$$\tau_{v}(G)=(\Z^2, v+\mathcal{E}),$$
where $v+\mathcal{E}=\{\{x+v,y+v\}: \{x,y\}\in \mathcal{E}\}.$ Observe that for any $v\in \mathbb Z^2$, the translation~$\tau_{v}$ is bijective, of inverse the translation $\tau_{-v}$. If $v\in \Z^2_e$, we call $\tau_v$ an \emph{even translation}.

\begin{remark} \label{symmetry} 
The law of the corner percolation model has some useful symmetry properties:
\begin{enumerate}
    \item The model of parameters $(p,q)$ has the same law as the model of parameters $(1-p,1-q)$  translated by the vector $(1,0)$ or by the vector $(0,1)$. Therefore, the distribution $\P$ is invariant under even translations. 
    \item The model of parameters $(p,q)$ has the same law as the model obtained by symmetry with respect to the $x$-axis (respectively $y$-axis) of the model of parameters $(p,1-q)$ (respectively $(1-p,q)$).
    \item The model of parameters $(q,p)$ has the same law as the model obtained by symmetry with respect to the line of equation $y=x$ of the model of parameters $(p,q)$.
\end{enumerate}
These transformations also let the number of infinite paths invariant.
\end{remark}


\begin{proposition}\label{ergo}
For all $(k,\ell)\in \mathbb{Z}^2_e$, such that $k\not=0$ and $\ell\not=0$, the dynamical system $(\G,\mathfrak{F},\P,\tau_{(k,\ell)})$ is \emph{ergodic}, that is: for any event $A$ such that $\tau_{(k,\ell)}^{-1}(A)=A$, we have $\P(A)\in\{0,1\}$.
\end{proposition}

\begin{proof} We will show that $(\G,\P,\mathfrak{F},\tau_{(k,\ell)})$ is \emph{mixing}, that is: for any events $A$ and $B$, 
$$\lim\limits_{n\to +\infty}\P(A\cap \tau_{(k,\ell)}^{-n}(B))=\P(A)\P(B).$$ 
Indeed, Corollary 2.2.6 of \cite{Keller} shows that it implies ergodicity.

Let us first assume that $A$ and $B$ depend only on finitely many lines and columns. Let~$I_A$ and $J_A$ (resp. $I_B$ and $J_B$) be finite subsets of $\Z$ such that $A$ (resp. $B$) depends only on $\{\eta(i):{i\in I_A}\}$ and $\{\xi(j):{j\in J_A}\}$ (resp. on $\{\eta(i):{i\in I_B}\}$ and $\{\xi(j):{j\in J_B}\}$). Since $k\not=0$ and $\ell\not=0$, for $n$ large enough, we have $I_A\cap (-kn+I_B)=\emptyset$ and $J_A\cap (-\ell n+J_B)=\emptyset$, so that the events $A$ and $\tau_{(k,\ell)}^{-n}(B)$ depend on disjoint sets of lines and columns. The probability $\mu$ is a product distribution, so we have $\P(A\cap\tau_{(k,\ell)}^{-n}(B))=\P(A)\times\P(\tau_{(k,\ell)}^{-n}(B))$. Since $(k,\ell)\in\Z^2_e$, the translation $\tau_{(k,\ell)}$ is even and preserves $\P$. So, we obtain $$\P(A\cap\tau_{(k,\ell)}^{-n}(B))=\P(A)\P(B).$$ 
Events depending only on finitely many lines and columns generate the product $\sigma$-algebra, so that by Lemma 2.2.7 of \cite{Keller}, we obtain that $(\G,\mathfrak{F},\P,\tau_{(k,\ell)})$ is mixing, and therefore ergodic.
\end{proof}







Now, we state two consequences of Birkhoff's pointwise ergodic theorem, that
will be used multiple times thereafter. We first define
$$\begin{array}{ccccc}
\phi &: & \mathbb{Z}^2 & \to & \mathbb{Z}^2_e \\
 & & x=(x_1,x_2) & \mapsto & (x_1-x_2,x_1+x_2) \\
\end{array},$$
which is a bijection. We also set (see Figure \ref{CNBN}), for all $N \in \mathbb{N}^*$, $B_N=\llbracket -N,N \rrbracket^2$ and 
$$C_N=\phi(B_N)=\{x\in \mathbb{Z}^2_e~ : ~ -2N\leq x_1\leq 2N \text{ and } -2N+|x_1|\leq x_2 \leq 2N-|x_1| \}.$$

\begin{figure}
    \centering
    \begin{tikzpicture}[scale=0.5]
    \def\rayhole{2.5pt}
    \draw[black, thick, ->] (0,-5) -- (0,5) node[anchor=west]{$y$};
    \draw[black, thick, ->] (-5,0) -- (5,0) node[anchor=west]{$x$};
    
    \filldraw[draw=red,fill=white,very thick] (0,0) circle (4pt);
    \draw[draw=red,fill=white,very thick] (0,2) circle (4pt);
    \draw[draw=red,fill=white,very thick] (0,4) circle (4pt);
    \draw[draw=red,fill=white,very thick] (0,-2) circle (4pt);
    \draw[draw=red,fill=white,very thick] (0,-4) circle (4pt);
    
    \draw[draw=red,fill=white,very thick] (1,1) circle (4pt);
    \draw[draw=red,fill=white,very thick] (1,3) circle (4pt);
    \draw[draw=red,fill=white,very thick] (1,-1) circle (4pt);
    \draw[draw=red,fill=white,very thick] (1,-3) circle (4pt);
    
    \draw[draw=red,fill=white,very thick] (2,0) circle (4pt);
    \draw[draw=red,fill=white,very thick] (2,2) circle (4pt);
    \draw[draw=red,fill=white,very thick] (2,-2) circle (4pt);
    
    \draw[draw=red,fill=white,very thick] (3,1) circle (4pt);
    \draw[draw=red,fill=white,very thick] (3,-1) circle (4pt);

    \draw[draw=red,fill=white,very thick] (4,0) circle (4pt);

    \draw[draw=red,fill=white,very thick] (-1,1) circle (4pt);
    \draw[draw=red,fill=white,very thick] (-1,3) circle (4pt);
    \draw[draw=red,fill=white,very thick] (-1,-1) circle (4pt);
    \draw[draw=red,fill=white,very thick] (-1,-3) circle (4pt);
    
    \draw[draw=red,fill=white,very thick] (-2,0) circle (4pt);
    \draw[draw=red,fill=white,very thick] (-2,2) circle (4pt);
    \draw[draw=red,fill=white,very thick] (-2,-2) circle (4pt);
    
    \draw[draw=red,fill=white,very thick] (-3,1) circle (4pt);
    \draw[draw=red,fill=white,very thick] (-3,-1) circle (4pt);
    
    \draw[draw=red,fill=white,very thick] (-4,0) circle (4pt);
    
    \filldraw[black] (0,0) circle (0pt) node[anchor=north east] {$0$};
    \filldraw[black] (2,0) circle (0pt) node[anchor=north east] {$2$};
    \filldraw[black] (0,2) circle (0pt) node[anchor=north east] {$2$};

    \filldraw[gray] (-2,-2) circle (2pt);
    \filldraw[gray] (-2,-1) circle (2pt);
    \filldraw[gray] (-2,0) circle (2pt);
    \filldraw[gray] (-2,1) circle (2pt);
    \filldraw[gray] (-2,2) circle (2pt);
    
    \filldraw[gray] (-1,-2) circle (2pt);
    \filldraw[gray] (-1,-1) circle (2pt);
    \filldraw[gray] (-1,0) circle (2pt);
    \filldraw[gray] (-1,1) circle (2pt);
    \filldraw[gray] (-1,2) circle (2pt);
    
    \filldraw[gray] (0,-2) circle (2pt);
    \filldraw[gray] (0,-1) circle (2pt);
    \filldraw[gray] (0,0) circle (2pt);
    \filldraw[gray] (0,1) circle (2pt);
    \filldraw[gray] (0,2) circle (2pt);
    
    \filldraw[gray] (1,-2) circle (2pt);
    \filldraw[gray] (1,-1) circle (2pt);
    \filldraw[gray] (1,0) circle (2pt);
    \filldraw[gray] (1,1) circle (2pt);
    \filldraw[gray] (1,2) circle (2pt);
    
    \filldraw[gray] (2,-2) circle (2pt);
    \filldraw[gray] (2,-1) circle (2pt);
    \filldraw[gray] (2,0) circle (2pt);
    \filldraw[gray] (2,1) circle (2pt);
    \filldraw[gray] (2,2) circle (2pt);
    
    \end{tikzpicture}
    \caption{Representation of $C_N$ (red circles) and $B_N$ (grey points), for $N=2$.}
    \label{CNBN}
\end{figure}
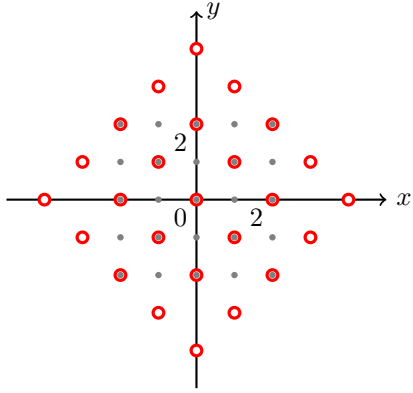

\begin{theorem}[Birkhoff's pointwise ergodic theorem] \label{Birkhoff}
$\;$
\begin{enumerate}
\item For all $(k,\ell) \in \mathbb{Z}^2_e$ such that $k\ne 0$ and $\ell \ne 0$, if $f \in L^1(\G)$, then
$$\displaystyle \lim\limits_{n \to +\infty} \frac{1}{n}\sum\limits_{i=0}^{n-1} f \circ \tau_{(k,l)}^{{i}} = \mathbb{E}(f) \mbox{ a.s.}$$
\item If $f \in L^1(\G)$, then
$\displaystyle \lim\limits_{N \to +\infty} \frac{1}{\#C_N}\sum\limits_{x \in C_N} f \circ \tau_{x} = \mathbb{E}(f)$ a.s.
\end{enumerate}
\end{theorem}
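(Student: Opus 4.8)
The first statement is simply the classical one-dimensional Birkhoff pointwise ergodic theorem applied to the dynamical system $(\G,\mathfrak{F},\P,\tau_{(k,\ell)})$. For $(k,\ell)\in\mathbb{Z}^2_e$ with $k\ne 0$ and $\ell\ne 0$, the translation $\tau_{(k,\ell)}$ is even and preserves $\P$ by the first point of Remark \ref{symmetry}, and the system is ergodic by Proposition \ref{ergo}. Hence, for $f\in L^1(\G)$, the standard Birkhoff theorem gives the almost sure convergence of the time averages $\frac{1}{n}\sum_{i=0}^{n-1} f\circ\tau_{(k,\ell)}^{i}$ to $\E(f)$, which is exactly the claim.

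For the second statement, the plan is to reinterpret the averages over $C_N$ as spatial averages of a $\mathbb{Z}^2$-action over the squares $B_N$, and then to invoke the multidimensional pointwise ergodic theorem. First I would note that $\phi:\mathbb{Z}^2\to\mathbb{Z}^2_e$ is a group isomorphism (it is $\mathbb{Z}$-linear and, as already observed, bijective onto $\mathbb{Z}^2_e$), so that setting $S_a=\tau_{\phi(a)}$ for $a\in\mathbb{Z}^2$ defines a measure-preserving action of $\mathbb{Z}^2$ on $(\G,\mathfrak{F},\P)$: indeed each $\phi(a)\in\mathbb{Z}^2_e$, so $\tau_{\phi(a)}$ is an even translation and preserves $\P$ by Remark \ref{symmetry}. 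Since $C_N=\phi(B_N)$ and $\phi$ is a bijection, we have $\#C_N=\#B_N=(2N+1)^2$ and
$$\frac{1}{\#C_N}\sum_{x\in C_N} f\circ\tau_x=\frac{1}{\#B_N}\sum_{a\in B_N} f\circ S_a,$$
so the quantity to control is precisely the ergodic average of $f$ along the Følner sequence of squares $B_N=\llbracket -N,N\rrbracket^2$ for the action $S$.

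Next I would check that the action $S$ is ergodic. This is immediate from Proposition \ref{ergo}: for $a=(1,0)$ we get $S_{(1,0)}=\tau_{\phi(1,0)}=\tau_{(1,1)}$, and since $(1,1)\in\mathbb{Z}^2_e$ has both coordinates nonzero, the single transformation $\tau_{(1,1)}$ is already ergodic. Any event invariant under the whole action $S$ is in particular invariant under $S_{(1,0)}$, hence has probability $0$ or $1$, so $S$ is ergodic. Having measure-preservation and ergodicity in hand, the conclusion follows by applying the multidimensional (Wiener) pointwise ergodic theorem for $\mathbb{Z}^2$-actions to the sequence of squares $(B_N)_N$, yielding the almost sure convergence of the averages above to $\E(f)$.

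The point requiring the most care is the invocation of the correct multidimensional statement: one cannot simply iterate the one-dimensional result of the first part, since almost sure convergence of iterated one-dimensional averages is not delivered by a Fubini-type argument. Instead the square averages must be treated directly, via the multidimensional maximal ergodic inequality, which is what guarantees pointwise convergence along the cubes $B_N$; the relevant input is that $(B_N)_N$ is a regular Følner sequence, the canonical case for which this version of the theorem applies.
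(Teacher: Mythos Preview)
Your proof is correct and follows essentially the same route as the paper: both parts invoke Birkhoff's theorem (one-dimensional for part 1, multidimensional for part 2), with the $\mathbb{Z}^2$-action in part 2 generated by $\tau_{(1,1)}$ and $\tau_{(-1,1)}$ via the bijection $\phi$, and ergodicity of the action deduced from the ergodicity of the single generator $\tau_{(1,1)}$. Your additional remark explaining why the two-dimensional statement cannot be obtained by iterating the one-dimensional one is a nice clarification, but the core argument is identical to the paper's.
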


\begin{proof}
Since $\P$ is invariant under the even translation $\tau_{(k,l)}$, the first point is exactly Birkhoff's theorem (see for instance Theorem 2.1.5 of \cite{Keller}). By Proposition \ref{ergo}, the dynamical system $(\G,\P,\mathfrak{F},\tau_{(k,\ell)})$ is ergodic, thus the limit is constant.

In the second point, we apply a multi-dimensional ergodic theorem. Consider the two commuting even translations $\tau_1=\tau_{(1,1)}$ and $\tau_2=\tau_{(-1,1)}$. We can apply Theorem 2.1.5 of \cite{Keller} to the set $\mathcal{T}=\{\tau_1^{x_1}\circ \tau_2^{x_2}=\tau_{\phi(x_1,x_2)},~ (x_1,x_2)\in\Z^2\}$. The ergodicity of $(\G,\mathfrak{F},\mathbb{P},\tau_{1})$ ensures that the $\sigma$-algebra of the events invariant by $\mathcal{T}$ is trivial, so the limit is equal to~$\E(f)$.
\end{proof}

\section{Height function and infinite paths}

Now, we define the \emph{height} function $H: \Z^2+(\frac{1}{2},\frac{1}{2}) \to \mathbb Z$, which was the key ingredient introduced by G\'abor Pete in \cite{Corner} to study the case $(\frac{1}{2},\frac{1}{2})$. We color the faces of $\Z^2$ in a chessboard manner: a face $(n+\frac{1}{2},m+\frac{1}{2})$ is black if $n+m\in 2\Z$, otherwise it is white. Note that a path has only black faces along one side and only white faces along the other. We set $H\left(\frac{1}{2},\frac{1}{2}\right)=0$. For a face $(n+\frac{1}{2},m+\frac{1}{2})$, consider a path in $\Z^2+(\frac{1}{2},\frac{1}{2})$, from $(\frac{1}{2},\frac{1}{2})$ to $(n+\frac{1}{2},m+\frac{1}{2})$. We follow this path starting from $(\frac{1}{2},\frac{1}{2})$, and each time it crosses an open edge from a black face to a white face (respectively from a white face from a black face), we add (respectively substract) $1$ to the height. Note that it does not depend on the choice of the path, and all black faces (respectively all white faces) have the same height along a path. The \emph{level of a path} is the height of a black face along this path. 

Despite the strong dependencies of the model, the height function can be expressed as a function of two independent simple random walks on $\Z$. Remember that we denote by $(\xi(n))_{n\in \Z}$ (resp. $(\eta(m))_{m\in \Z}$) the values of the vertical (resp. horizontal) lines, and that its law is $(q\delta_1+(1-q)\delta_{-1})^{\otimes \mathbb{Z}}$ (resp. $(p\delta_1+(1-p)\delta_{-1})^{\otimes \mathbb{Z}}$). We define two independent random walks $(X_n)_{n \in \mathbb Z}$ and $(Y_m)_{m \in \mathbb Z}$ by setting:
\begin{align*}
    X_0=0, \quad \text{ for } n>0,\, X_n=\sum\limits_{i=1}^n \xi(i), & \quad \text{ and for }n<0,\, X_n=-\sum\limits_{i=n+1}^0 \xi(i);\\
    Y_0=0, \quad \text{ for } m>0,\, Y_m=\sum\limits_{i=1}^m \eta(i), & \quad \text{ and for }m<0,\, Y_m=-\sum\limits_{i=m+1}^0 \eta(i).
\end{align*}
It is not difficult to check that:
$$H\left(n+\frac{1}{2},m+\frac{1}{2}\right)=\left\lceil\frac{X_n+Y_m}{2}\right\rceil.$$
We can now prove our main result:

\begin{theorem} \label{existinftypath}
There almost surely exists an infinite number of infinite paths in corner percolation with parameters $(p,q)$, as soon as $(p,q)\ne(\frac{1}{2},\frac{1}{2})$. 
\end{theorem}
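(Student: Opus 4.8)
The plan is to read off the infinite paths directly from the level lines of the height function. By the symmetries recorded in Remark~\ref{symmetry} (reflections across the axes and across the line $y=x$, all of which preserve the number of infinite paths), I may assume $q>\tfrac12$, so that the walk $(X_n)$ has strictly positive drift $2q-1$. Recall from the excerpt that along any component all black faces share one height and all white faces share an adjacent height, so each component lies on a single level line of $H$; using $H(n+\tfrac12,m+\tfrac12)=\lceil (X_n+Y_m)/2\rceil$, I set $A_\ell=\{(n,m):X_n+Y_m\le 2\ell\}$ and consider, for each $\ell\in\mathbb{Z}$, the interface $\partial A_\ell$ of open edges separating a face of height $\ell$ from a face of height $\ell+1$. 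Two such faces, being adjacent and differing by at least $1$ in height while any edge changes the height by at most one, differ by exactly one, so the separating edge is indeed open; moreover the unordered pair of heights $\{\ell,\ell+1\}$ across an edge determines $\ell$, whence the interfaces $\partial A_\ell$ are pairwise edge-disjoint and each is a union of components of the configuration.

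First I would prove that each $\partial A_\ell$ contains an infinite path. By the strong law of large numbers $X_n/n\to 2q-1>0$ almost surely, so $X_n\to+\infty$ as $n\to+\infty$ and $X_n\to-\infty$ as $n\to-\infty$; reading heights along the row $m=0$, where $H(n+\tfrac12,\tfrac12)=\lceil X_n/2\rceil$, this shows that both $A_\ell$ and its complement contain infinitely many faces of that row, hence both are unbounded. Let $n_0$ be the last $n$ with $X_n\le 2\ell$; the vertical edge separating the face $(n_0+\tfrac12,\tfrac12)\in A_\ell$ from $(n_0+\tfrac32,\tfrac12)\in A_\ell^{c}$ lies in $\partial A_\ell$, and I claim its component $\gamma_\ell$ is infinite. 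Indeed, a finite component is a circuit whose interior is a bounded set of faces; but $\gamma_\ell$ has faces of $A_\ell$ on one side and faces of $A_\ell^{c}$ on the other, and each of these families reaches arbitrarily far away (to the left, resp.\ to the right, along the row), so neither side can be the bounded interior of a circuit. Hence $\gamma_\ell$ is a bi-infinite path.

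Finally I would carry out this construction for every $\ell\in\mathbb{Z}$: since the interfaces $\partial A_\ell$ are edge-disjoint, the paths $\gamma_\ell$ are pairwise distinct, and there are infinitely many of them, which yields almost surely infinitely many infinite paths. (Alternatively, once one infinite path is produced, the ergodicity of Proposition~\ref{ergo} shows that the invariant event ``there is an infinite path'' has probability one and that the number of infinite paths is almost surely constant; but the explicit family $(\gamma_\ell)_{\ell\in\mathbb{Z}}$ already forces that constant to be $\infty$.) The main obstacle is the separation step: turning the intuitive statement ``a finite circuit cannot separate two unbounded face-regions'' into a rigorous planar-topology argument, and in particular verifying that the $A_\ell$- and $A_\ell^{c}$-faces adjacent to $\gamma_\ell$ genuinely connect to infinity on the two respective sides of $\gamma_\ell$. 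Controlling, through the independent drifting walks $(X_n)$ and $(Y_m)$, that both regions are unbounded with an unbounded common boundary is the analytic heart of the argument.
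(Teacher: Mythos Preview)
Your strategy is sound and genuinely different from the paper's, and the obstacle you flag can be closed with a short parity argument rather than any delicate planar topology. On the row $m=0$ the height is $H(n+\tfrac12,\tfrac12)=\lceil X_n/2\rceil$, and since $X$ has $\pm1$ steps with $X_n\to-\infty$ as $n\to-\infty$ and $X_n\to+\infty$ as $n\to+\infty$, the walk crosses the level $2\ell+\tfrac12$ an \emph{odd} number of times; equivalently, the number of vertical edges of $\partial A_\ell$ of the form $\{(n,0),(n,1)\}$ is odd. Any finite circuit is a simple closed lattice curve and therefore meets the horizontal line $y=\tfrac12$ in an even number of vertical edges. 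Summing over the finitely many components of $\partial A_\ell$ that touch this row, if all were finite circuits the total would be even, a contradiction. Hence at least one component meeting row $0$ is a bi-infinite path, and this holds for every $\ell\in\Z$, giving infinitely many pairwise edge-disjoint infinite paths. This bypasses entirely the question of whether the $A_\ell$-side of your specific edge at $n_0$ connects to infinity (it need not: going left from $n_0$ the walk $X$ may overshoot $2\ell$ again, so your Case ``$A_\ell$-side bounded'' is not immediately excluded for that particular edge).

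By contrast, the paper proceeds in two quite different steps: it first manufactures, via a finite-energy modification inside a large box $B_K$ together with law-of-large-numbers control of the heights in two opposite cones outside $B_K$, a single event of positive probability on which the component of the origin is infinite; it then invokes Birkhoff's ergodic theorem along a suitable even direction to produce infinitely many translates of this event with disjoint (distinct-level) infinite components. Your argument is more direct and uses neither the finite-energy surgery nor ergodicity, obtaining the almost-sure statement in one stroke from the drift of $(X_n)$ alone. What the paper's construction buys in return is an explicit event on which the origin itself lies on an infinite path whose two ends go to opposite half-cones; this extra information is reused later (Remark~\ref{Rebroussement}) and is not visible from your level-line picture.
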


\begin{proof} 
\noindent\underline{Step 1.} We want to construct an event having positive probability, ensuring that the component of the origin is infinite. First of all, note that we have, for all $n\in \Z$,
$$\P(X_{n+1}=X_n+1)=q \text{\quad and\quad }\P(Y_{n+1}=Y_n+1)=p.$$
We start by assuming that $q>\frac{1}{2}$: by the symmetry properties $2$ and $3$ of Remark \ref{symmetry}, we will have the result for all parameters $(p,q)\ne (\frac{1}{2},\frac{1}{2})$. By the law of large numbers, we have $\lim\limits_{n\to +\infty}X_n=+\infty$ and $\lim\limits_{n\to +\infty} X_{-n}=-\infty$. Then let $K>\frac{16}{2q-1}$ be such that
$$\forall n\geq K,\quad \frac{X_n}{n}-\frac{2q-1}{2}>\frac{2q-1}{4}\quad \text{and}\quad \frac{X_{-n}}{n}-\frac{1-2q}{2}<\frac{1-2q}{4},$$
with positive probability. We have, for all $n\geq K$ and $m\in \N$ such that $m\leq \frac{2q-1}{2}n$, on one hand
\begin{align*}
    \left\lceil \frac{X_n+Y_m}{2} \right\rceil&\geq \frac{X_n-\frac{2q-1}{2}n}{2}
    \geq \frac{n}{2}\left(\frac{X_n}{n}-\frac{2q-1}{2}\right)
    >1,
    \end{align*}
and on the other hand
\begin{align*}
    \left\lceil \frac{X_{-n}+Y_{-m}}{2} \right\rceil&\leq \frac{X_{-n}+\frac{2q-1}{2}n}{2}+1
    \leq \frac{n}{2}\left(\frac{X_{-n}}{n}-\frac{1-2q}{2}\right)+1
    <-1.
    \end{align*}
Therefore, the event
$$A=\left\{\forall n\geq K,~\forall 0\leq m \leq \frac{2q-1}{2}n,\quad \left\lceil\frac{X_{n}+Y_{m}}{2}\right\rceil> 1 \text{ and }\left\lceil\frac{X_{-n}+Y_{-m}}{2}\right\rceil< -1\right\}$$
has positive probability. For $(\xi,\eta)\in \left(\{-1,1\}^{\Z}\right)^2$, we denote by $(\xi_K,\eta_K)\in \left(\{-1,1\}^{\llbracket -K,K \rrbracket}\right)^2$ the restrictions $\left((\xi_n)_{|n|\leq K},(\eta_n)_{|n|\leq K}\right)$.
Let us define the map 
$$\begin{array}{rcccl}
    \Pi & : & \left(\{-1,1\}^{\Z}\right)^2 & \to & \left(\{-1,1\}^{\Z\setminus \llbracket -K,K \rrbracket}\right)^2 \\
    & & (\xi,\eta) & \mapsto & \left((\xi_n)_{|n|>K},(\eta_n)_{|n|>K}\right)
\end{array}.$$
Now we set 
$$A_{St}=\left\{(\xi,\eta)\in (\{-1,1\}^{\Z})^2~:~(\xi_{K},\eta_{K})=(\mathbf{1}_K,\mathbf{1}_K),~\Pi(\xi,\eta)\in \Pi(A)\right\}.$$
As all vertical and horizontal lines are independent, and $A\subset \Pi^{-1}\circ\Pi(A)$, we have
\begin{align*}
\P\left(A_{St}\right)&\geq \P\left((\xi_{K},\eta_{K})=(\mathbf{1}_K,\mathbf{1}_K)\right)\P\left(\Pi^{-1}\circ\Pi(A)\right)\geq p^{(2K+1)^2}\P(A)>0.
\end{align*}
Now let us prove that $A_{St}\subset A$. Let $(\xi,\eta)\in A_{St}$: there exists $(\Tilde{\xi},\Tilde{\eta})\in A$ such that $\Pi(\Tilde{\xi},\Tilde{\eta})=\Pi(\xi,\eta)$. We denote by $(\Tilde{X}_n)_{n\in \Z}$ and $(\Tilde{Y}_m)_{m\in \Z}$ the same walks as before for $(\Tilde{\xi},\Tilde{\eta})$. Then for all $n\geq K$ and $0\leq m \leq \frac{2q-1}{2}n$, we have 
\begin{align*}
H_{(\xi,\eta)}\left(n+\frac{1}{2},m+\frac{1}{2}\right)&=\left\lceil \frac{X_n+Y_m}{2} \right\rceil\\
&=\left\lceil \frac{K+\sum\limits_{k=K+1}^{n}\xi(k)+\min(m,K)+\sum\limits_{k=\min(m,K)+1}^{m}\eta(k)}{2} \right\rceil\\
&=\left\lceil \frac{K+\sum\limits_{k=K+1}^{n}\Tilde{\xi}(k)+\min(m,K)+\sum\limits_{k=\min(m,K)+1}^{m}\Tilde{\eta}(k)}{2} \right\rceil\\
&\geq\left\lceil \frac{\sum\limits_{k=1}^{n}\Tilde{\xi}(k)+\sum\limits_{k=1}^{m}\Tilde{\eta}(k)}{2} \right\rceil\\
&\geq \left\lceil \frac{\Tilde{X}_n+\Tilde{Y}_m}{2} \right\rceil= H_{(\Tilde{\xi},\Tilde{\eta})}\left(n+\frac{1}{2},m+\frac{1}{2}\right).
\end{align*}
In the same manner we have, for all $n,m\leq -K$,
$$H_{(\xi,\eta)}\left(n+\frac{1}{2},m+\frac{1}{2}\right)\leq H_{(\Tilde{\xi},\Tilde{\eta})}\left(n+\frac{1}{2},m+\frac{1}{2}\right),$$
therefore $A_{St}\subset A$. We recall that $B_K=\llbracket -K,K \rrbracket^2$. On $A_{St}$, the components crossing $B_K$ are stairs in $B_K$, and all the heights of the faces in $B_K$ are known. The path containing the origin has level $-1$, so if it is finite, then the points $(-K,K)$ and $(K,-K)$ must be connected in $\Z^2\setminus B_K$. But it is not possible, because such a path would go through a zone where all the heights are positive or all the heights are negative, see Figure \ref{Prooftheorem8}.

\begin{figure}
    \centering
    \begin{tikzpicture}[scale=0.5]
    
    \fill[gray!20] (-4,2) -- (-3,2) -- (-3,3) -- (-4,3) -- cycle;
    \fill[gray!20] (-2,2) -- (-1,2) -- (-1,3) -- (-2,3) -- cycle;
    \fill[gray!20] (0,2) -- (1,2) -- (1,3) -- (0,3) --cycle;
    \fill[gray!20] (2,2) -- (3,2) -- (3,3) -- (2,3) --cycle;
    
    \fill[gray!20] (-3,2) -- (-2,2) -- (-2,1) -- (-3,1) -- cycle; 
    \fill[gray!20] (-1,2) -- (0,2) -- (0,1) -- (-1,1) -- cycle; 
    \fill[gray!20] (1,2) -- (2,2) -- (2,1) -- (1,1) -- cycle;
    \fill[gray!20] (3,2) -- (4,2) -- (4,1) -- (3,1) -- cycle;
    
    \fill[gray!20] (-3,4) -- (-2,4) -- (-2,3) -- (-3,3) -- cycle; 
    \fill[gray!20] (-1,4) -- (0,4) -- (0,3) -- (-1,3) -- cycle; 
    \fill[gray!20] (1,4) -- (2,4) -- (2,3) -- (1,3) -- cycle;
    \fill[gray!20] (3,4) -- (4,4) -- (4,3) -- (3,3) -- cycle;
    
    \fill[gray!20] (-4,0) -- (-3,0) -- (-3,1) -- (-4,1) -- cycle;
    \fill[gray!20] (-2,0) -- (-1,0) -- (-1,1) -- (-2,1) -- cycle;
    \fill[gray!20] (0,0) -- (1,0) -- (1,1) -- (0,1) --cycle;
    \fill[gray!20] (2,0) -- (3,0) -- (3,1) -- (2,1) --cycle;
    
    \fill[gray!20] (-3,0) -- (-2,0) -- (-2,-1) -- (-3,-1) -- cycle; 
    \fill[gray!20] (-1,0) -- (0,0) -- (0,-1) -- (-1,-1) -- cycle; 
    \fill[gray!20] (1,0) -- (2,0) -- (2,-1) -- (1,-1) -- cycle;
    \fill[gray!20] (3,0) -- (4,0) -- (4,-1) -- (3,-1) -- cycle;
    
    \fill[gray!20] (-4,-2) -- (-3,-2) -- (-3,-1) -- (-4,-1) -- cycle;
    \fill[gray!20] (-2,-2) -- (-1,-2) -- (-1,-1) -- (-2,-1) -- cycle;
    \fill[gray!20] (0,-2) -- (1,-2) -- (1,-1) -- (0,-1) --cycle;
    \fill[gray!20] (2,-2) -- (3,-2) -- (3,-1) -- (2,-1) --cycle;
    
    \fill[gray!20] (-3,-2) -- (-2,-2) -- (-2,-3) -- (-3,-3) -- cycle; 
    \fill[gray!20] (-1,-2) -- (0,-2) -- (0,-3) -- (-1,-3) -- cycle; 
    \fill[gray!20] (1,-2) -- (2,-2) -- (2,-3) -- (1,-3) -- cycle;
    \fill[gray!20] (3,-2) -- (4,-2) -- (4,-3) -- (3,-3) -- cycle;
    
    \fill[gray!20] (-4,-4) -- (-3,-4) -- (-3,-3) -- (-4,-3) -- cycle;
    \fill[gray!20] (-2,-4) -- (-1,-4) -- (-1,-3) -- (-2,-3) -- cycle;
    \fill[gray!20] (0,-4) -- (1,-4) -- (1,-3) -- (0,-3) --cycle;
    \fill[gray!20] (2,-4) -- (3,-4) -- (3,-3) -- (2,-3) --cycle;
    
    \draw[black, thin] (-4,1) -- (-3,1);
    \draw[black, thin] (-2,1) -- (-1,1);
    \draw[black, thin] (0,1) -- (1,1);
    \draw[black, thin] (2,1) -- (3,1);
    
    \draw[black, thin] (-3,2) -- (-2,2);
    \draw[black, thin] (-1,2) -- (0,2);
    \draw[black, thin] (1,2) -- (2,2);
    \draw[black, thin] (3,2) -- (4,2);
    
    \draw[black, thin] (-4,3) -- (-3,3);
    \draw[black, thin] (-2,3) -- (-1,3);
    \draw[black, thin] (0,3) -- (1,3);
    \draw[black, thin] (2,3) -- (3,3);
    
    \draw[black, thin] (-3,-2) -- (-2,-2);
    \draw[black, thin] (-1,-2) -- (0,-2);
    \draw[black, thin] (1,-2) -- (2,-2);
    \draw[black, thin] (3,-2) -- (4,-2);
    
    \draw[black, thin] (-4,-1) -- (-3,-1);
    \draw[black, thin] (-2,-1) -- (-1,-1);
    \draw[black, thin] (0,-1) -- (1,-1);
    \draw[black, thin] (2,-1) -- (3,-1);
    
    \draw[black, thin] (-3,0) -- (-2,0);
    \draw[black, thin] (-1,0) -- (0,0);
    \draw[black, thin] (1,0) -- (2,0);
    \draw[black, thin] (3,0) -- (4,0);
    
    \draw[black, thin] (-4,-3) -- (-3,-3);
    \draw[black, thin] (-2,-3) -- (-1,-3);
    \draw[black, thin] (0,-3) -- (1,-3);
    \draw[black, thin] (2,-3) -- (3,-3);
    
    \draw[black, thin] (-3,-4) -- (-2,-4);
    \draw[black, thin] (-1,-4) -- (0,-4);
    \draw[black, thin] (1,-4) -- (2,-4);
    \draw[black, thin] (3,-4) -- (4,-4);
    
    \draw[black, thin] (-3,4) -- (-2,4);
    \draw[black, thin] (-1,4) -- (0,4);
    \draw[black, thin] (1,4) -- (2,4);
    \draw[black, thin] (3,4) -- (4,4);

    \draw[black, thin] (-3,3) -- (-3,2);
    \draw[black, thin] (-3,1) -- (-3,0);
    \draw[black, thin] (-3,-1) -- (-3,-2);
    \draw[black, thin] (-3,-3) -- (-3,-4);
    
    \draw[black, thin] (-1,3) -- (-1,2);
    \draw[black, thin] (-1,1) -- (-1,0);
    \draw[black, thin] (-1,-1) -- (-1,-2);
    \draw[black, thin] (-1,-3) -- (-1,-4);
    
    \draw[black, thin] (0,4) -- (0,3);
    \draw[black, thin] (0,2) -- (0,1);
    \draw[black, thin] (0,0) -- (0,-1);
    \draw[black, thin] (0,-2) -- (0,-3);
    
    \draw[black, thin] (1,3) -- (1,2);
    \draw[black, thin] (1,1) -- (1,0);
    \draw[black, thin] (1,-1) -- (1,-2);
    \draw[black, thin] (1,-3) -- (1,-4);
    
    \draw[black, thin] (3,3) -- (3,2);
    \draw[black, thin] (3,1) -- (3,0);
    \draw[black, thin] (3,-1) -- (3,-2);
    \draw[black, thin] (3,-3) -- (3,-4);
    
    \draw[black, thin] (2,4) -- (2,3);
    \draw[black, thin] (2,2) -- (2,1);
    \draw[black, thin] (2,0) -- (2,-1);
    \draw[black, thin] (2,-2) -- (2,-3);
    
    \draw[black, thin] (4,4) -- (4,3);
    \draw[black, thin] (4,2) -- (4,1);
    \draw[black, thin] (4,0) -- (4,-1);
    \draw[black, thin] (4,-2) -- (4,-3);
    
    \draw[black, thin] (-2,4) -- (-2,3);
    \draw[black, thin] (-2,2) -- (-2,1);
    \draw[black, thin] (-2,0) -- (-2,-1);
    \draw[black, thin] (-2,-2) -- (-2,-3);
    
    \draw[black, thin] (-4,4) -- (-4,3);
    \draw[black, thin] (-4,2) -- (-4,1);
    \draw[black, thin] (-4,0) -- (-4,-1);
    \draw[black, thin] (-4,-2) -- (-4,-3);
    
    \filldraw[black] (1/2,1/2) circle (0pt) node[]{$0$}; 
    \filldraw[black] (-1/2,3/2) circle (0pt) node[]{$0$}; 
    \filldraw[black] (3/2,-1/2) circle (0pt) node[]{$0$}; 
    \filldraw[black] (5/2,-3/2) circle (0pt) node[]{$0$}; 
    \filldraw[black] (7/2,-5/2) circle (0pt) node[]{$0$}; 
    \filldraw[black] (-3/2,5/2) circle (0pt) node[]{$0$}; 
    \filldraw[black] (-5/2,7/2) circle (0pt) node[]{$0$}; 
    
    \filldraw[black] (3/2,3/2) circle (0pt) node[]{$1$}; 
    \filldraw[black] (5/2,1/2) circle (0pt) node[]{$1$}; 
    \filldraw[black] (7/2,-1/2) circle (0pt) node[]{$1$}; 
    \filldraw[black] (1/2,5/2) circle (0pt) node[]{$1$}; 
    \filldraw[black] (-1/2,7/2) circle (0pt) node[]{$1$}; 
    
    \filldraw[black] (-7/2,5/2) circle (0pt) node[]{$-1$};
    \filldraw[black] (5/2,-7/2) circle (0pt) node[]{$-1$};
    \filldraw[black] (3/2,-5/2) circle (0pt) node[]{$-1$};
    \filldraw[black] (1/2,-3/2) circle (0pt) node[]{$-1$};
    \filldraw[black] (-5/2,3/2) circle (0pt) node[]{$-1$};
    \filldraw[black] (-3/2,1/2) circle (0pt) node[]{$-1$};
    \filldraw[black] (-1/2,-1/2) circle (0pt) node[]{$-1$};
    \filldraw[black] (-3/2,1/2) circle (0pt) node[]{$-1$};
    \filldraw[black] (1/2,-3/2) circle (0pt) node[]{$-1$};
     
    \filldraw[black] (-3/2,-3/2) circle (0pt) node[]{$-2$};
    \filldraw[black] (-5/2,-1/2) circle (0pt) node[]{$-2$};
    \filldraw[black] (-7/2,1/2) circle (0pt) node[]{$-2$};
    \filldraw[black] (-1/2,-5/2) circle (0pt) node[]{$-2$};
    \filldraw[black] (1/2,-7/2) circle (0pt) node[]{$-2$};
    
    \filldraw[black] (-5/2,-5/2) circle (0pt) node[]{$-3$};
    \filldraw[black] (-7/2,-3/2) circle (0pt) node[]{$-3$};
    \filldraw[black] (-3/2,-7/2) circle (0pt) node[]{$-3$};
    
    \filldraw[black] (5/2,5/2) circle (0pt) node[]{$2$};
    \filldraw[black] (7/2,3/2) circle (0pt) node[]{$2$};
    \filldraw[black] (3/2,7/2) circle (0pt) node[]{$2$};
    
    \filldraw[black] (-7/2,-7/2) circle (0pt) node[]{$-4$};
    
    \filldraw[black] (7/2,7/2) circle (0pt) node[]{$3$};
    
    \filldraw[red] (0,0) circle (2pt); 
    \filldraw[blue] (-4,4) circle (2pt); 
    \filldraw[blue] (4,-4) circle (2pt); 
    
    \draw[black, thick] (-4,4/5) -- (-10,2) node[anchor=south west]{$y=\frac{-(2q-1)}{2}x$};
    \draw[black, thick] (-4,-4/5) -- (-10,-2) node[anchor=north west]{$y=\frac{2q-1}{2}x$};
    
    \draw[black, thick] (4,4/5) -- (10,2) node[anchor=south east]{$y=\frac{2q-1}{2}x$};
    \draw[black, thick] (4,-4/5) -- (10,-2) node[anchor=north east]{$y=\frac{-(2q-1)}{2}x$};

    \filldraw[black] (6,0) circle (0pt) node[]{$C_1$};
    \filldraw[black] (-6,0) circle (0pt) node[]{$C_{-1}$};

    \end{tikzpicture}
    \caption{Example of the construction of the event $A_{St}$ for $q=0.7$ and $K=4$. The path containing the origin has level $-1$. If a point $(n,m)$ is in $C_{-1}$ or $C_1$, then we have $H(n+\frac{1}{2},m+\frac{1}{2})>1$ or $H(n+\frac{1}{2},m+\frac{1}{2})<-1$. The only face with height $-1$ on the $x$-axis of $\Z^2+(\frac{1}{2},\frac{1}{2})$ is $(-\frac{3}{2},\frac{1}{2})$. Therefore, the two blue points cannot be in the same finite cycle.}
    \label{Prooftheorem8}
\end{figure}
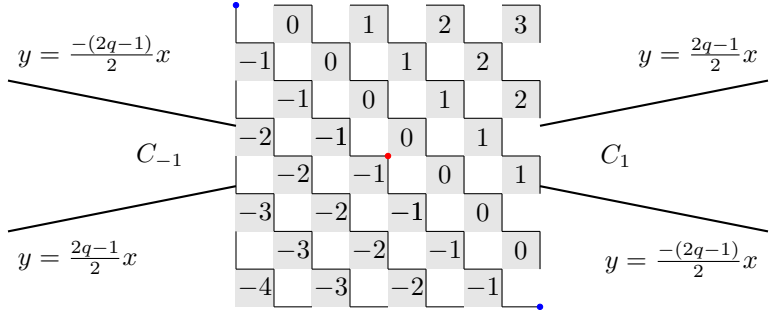

Therefore, on the event $A_{St}$, which has positive probability, the connected component of the origin is infinite.

\medskip \noindent\underline{Step 2.} Now we use Theorem \ref{Birkhoff} to construct an infinity of infinite paths. Let $(\alpha_1,\alpha_2)\in \N\times \N^*$ be such that $0<\frac{\alpha_1}{\alpha_2}<\frac{2q-1}{2}$. By Theorem \ref{Birkhoff}, we have
$$\lim\limits_{n \to +\infty} \frac{1}{n}\sum\limits_{i=0}^{n-1} \mathds{1}_{A_{St}} \circ \tau_{(\alpha_2,\alpha_1)}^{{i}} = \P(A_{St})>0 ~a.s.,$$
which means that almost surely, there exists a sequence of integers $(N_k)_{k\in \N}$ such that for all $k\in \mathbb{N}$, $\tau^{-{N_k}}(A_{St})$ is realised and $N_{k+1}-N_k>K$. The components of the points $((N_k,N_k))_{k\in \N}$ are infinite, and the last condition ensures that they are disjoint two by two, since they all have different levels. Therefore, we just proved that there almost surely exists an infinite number of infinite connected components.
\end{proof}

In the next section, we prove the last part of Theorem \ref{PremierResultat}, stating that all infinite paths have a same asymptotic slope. The proof will require one more time the height function.

\section{Direction of the infinite paths}


\begin{figure}[h]
    \centering
    \begin{subfigure}{.45\textwidth}
    \includegraphics[width=5.2cm,height=5.2cm]{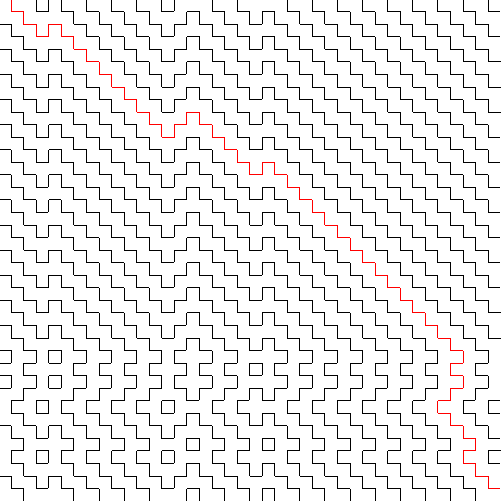}
    \end{subfigure}\hspace{0.5cm}
    \begin{subfigure}{.45\textwidth}
    \includegraphics[width=5.2cm,height=5.2cm]{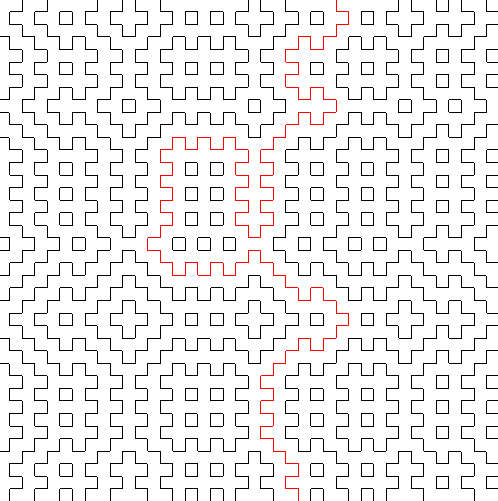}
    \end{subfigure}
    \newline

   \vspace{0.2cm}
    \begin{subfigure}{.45\textwidth}
    \includegraphics[width=5.2cm,height=5.2cm]{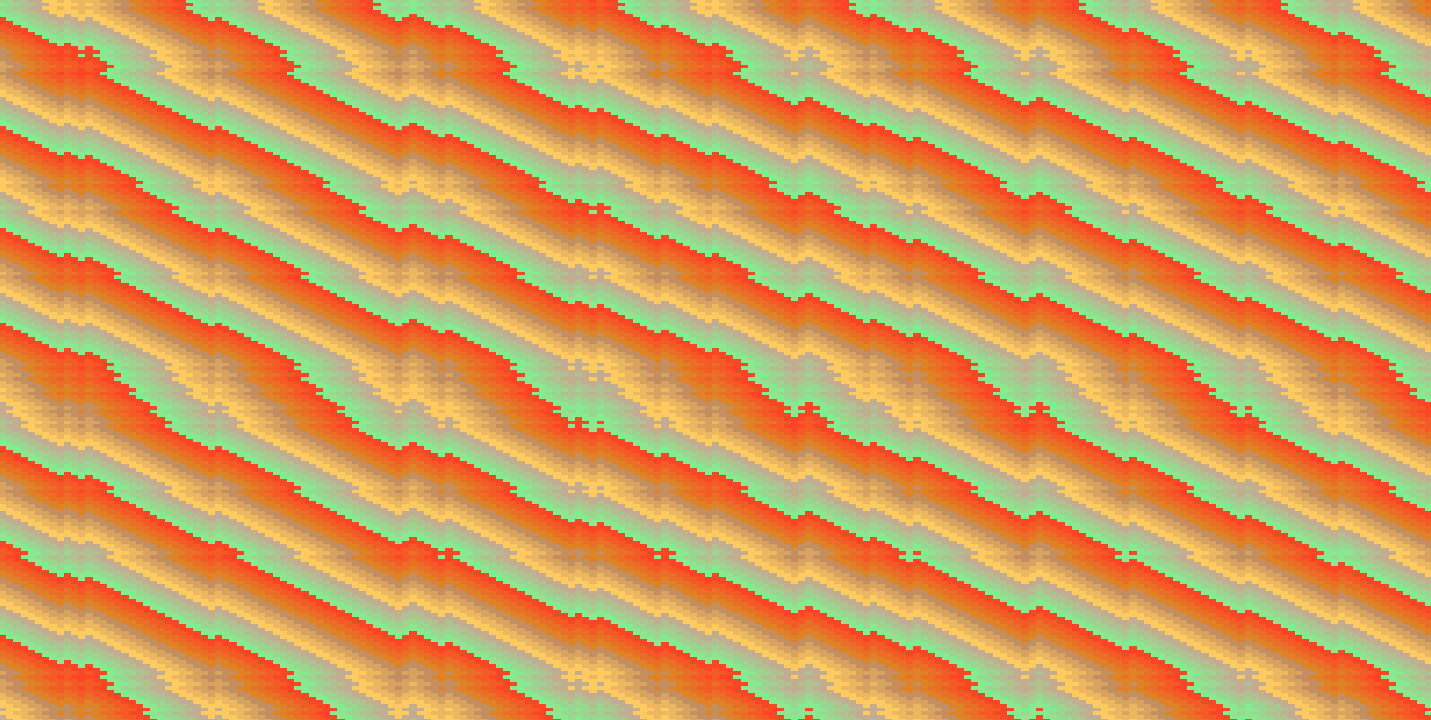}
    \end{subfigure}\hspace{0.5cm}
    \begin{subfigure}{.45\textwidth}
    \includegraphics[width=5.2cm,height=5.2cm]{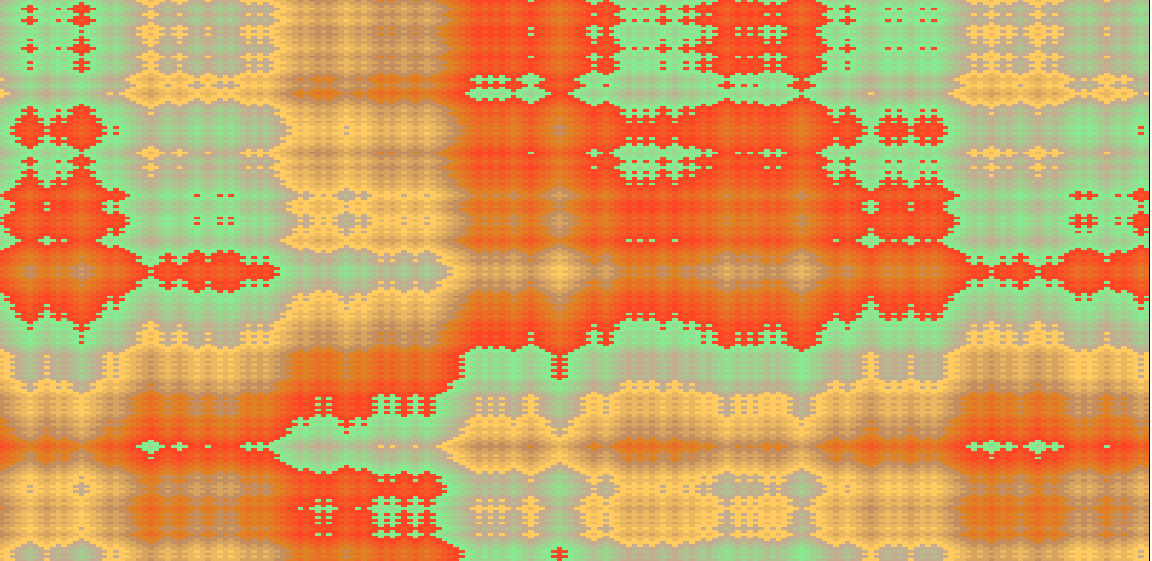}
    \end{subfigure}
    \caption{Simulations on the left (resp. right) have parameters $p=q=0.9$ (resp. $q=0.6$ and $p=0.5$). Faces are colored acccording to their heights, with colors that repeat periodically.
    }
    \label{SimuEscaliers}
\end{figure}

Remember that the connected components of the graph are either finite circuits or bi-infinite simple paths. We encode the connected component of the origin by a bi-infinite, possibly periodic, path $\Gamma=(\Gamma_{n})_{n \in \mathbb Z}$ of vertices. We choose its orientation by taking~$\Gamma_1$ such that when we walk along the path in the positive direction, the face with the highest height is on the right side of the path. Note that it amounts to choosing $\Gamma_1$ such that $\{\Gamma_0,\Gamma_1\}$ is a horizontal edge. The origin of the path is $\Gamma_0=(0,0)$. If the connected component of the origin is infinite, then $\Gamma$ is simple, while if the connected component of the origin is a circuit, then $\Gamma$ is periodic.

Let us denote by $D_R$ the disk of radius $R\in \mathbb{R}^*_+$, and by $\Z[i]$ the set $\{a+ib~:~ a,b\in \Z \}$. We set $z_{p,q}=2p-1+i(2q-1)$. We denote by $\theta_{p,q}$ the argument of $z_{p,q}$ in $\left[0,2\pi\right[$. Then we define, for $\varepsilon>0$, the following discrete cone
$$C_{p,q}^\varepsilon=\{z\in \Z[i]~ :~ \langle \frac{z}{|z|},e^{i\theta_{p,q}} \rangle  \leq \varepsilon\}.$$
Note that $\tan\left(\theta_{p,q}+\frac{\pi}{2}\right)=\frac{2q-1}{1-2p}$, if $p\ne \frac{1}{2}$. Therefore, $C_{p,q}^\varepsilon$ is centered around the line of equation $y=\frac{2q-1}{1-2p}x$ if $p\ne \frac{1}{2}$, and around the $y$-axis if $p=\frac{1}{2}$.

\begin{theorem} \label{directionofpath}
Let $(p,q)\ne (\frac{1}{2},\frac{1}{2})$. Then we have, for all $\varepsilon>0$,
$$\P(\exists R\in \mathbb{R}^*_+~:~\Gamma \subset D_R \cup C_{p,q}^\varepsilon)=1.$$
\end{theorem}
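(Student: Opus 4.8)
The plan is to play off two facts against each other: along the component $\Gamma$ of the origin the height function stays \emph{bounded}, whereas on the whole plane the height is \emph{asymptotically linear} in the position, by the law of large numbers for the walks $(X_n)$ and $(Y_m)$. A quantity that is asymptotically a nonzero linear form in $(n,m)$ can stay bounded only along a line, and that line is exactly the axis of the cone $C_{p,q}^\varepsilon$.

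First I would dispose of the easy case: if the component of the origin is a finite circuit it is bounded, so the statement holds with any $R$ exceeding its diameter. Assume then that $\Gamma$ is an infinite simple path, and let $\ell_0$ be its level, i.e. the common height of the black faces bordering it. Since two faces sharing an edge have heights differing by at most $1$, and since each vertex $(n,m)\in\Gamma$ has among its four surrounding faces a black face of $\Gamma$ at height $\ell_0$, all four of these faces have height in $[\ell_0-2,\ell_0+2]$. As $\ell_0$ is read off from the heights near the origin, it is an almost surely finite random integer. Plugging the face $(n+\frac12,m+\frac12)$ into $H(n+\frac12,m+\frac12)=\lceil (X_n+Y_m)/2\rceil$ then yields a finite random constant $c$ with
$$\forall (n,m)\in\Gamma,\qquad |X_n+Y_m|\le c.$$

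I would then bring in the asymptotics of the walks. By the strong law of large numbers for the two-sided walks, almost surely $X_n/n\to 2q-1$ and $Y_m/m\to 2p-1$ as $n,m\to\pm\infty$; equivalently, for each $\delta>0$ there is a finite random $M$ with $|X_n-(2q-1)n|\le\delta|n|+M$ and $|Y_m-(2p-1)m|\le\delta|m|+M$ for all $n,m$. Writing $z=n+im$ and $|z|=\sqrt{n^2+m^2}$, for every $(n,m)\in\Gamma$ the previous bound gives
$$|(2q-1)n+(2p-1)m|\le |X_n+Y_m|+|X_n-(2q-1)n|+|Y_m-(2p-1)m|\le c+2\delta|z|+2M.$$
Dividing by $|z|$ and letting $|z|\to\infty$ along $\Gamma$ --- legitimate since an infinite simple path is unbounded --- the left-hand side has limit superior at most $2\delta$, hence tends to $0$ as $\delta$ is arbitrary. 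Thus $z/|z|$ becomes asymptotically orthogonal to $(2q-1,2p-1)$, i.e. it aligns with the line of slope $\frac{2q-1}{1-2p}$ that is the axis of $C_{p,q}^\varepsilon$. Choosing $\delta$ small in terms of $\varepsilon$ and $|z_{p,q}|$, this produces a random radius $R$ beyond which every vertex of $\Gamma$ satisfies the cone inequality, which is precisely $\Gamma\subset D_R\cup C_{p,q}^\varepsilon$. The hypothesis $(p,q)\ne(\frac12,\frac12)$ enters exactly here, guaranteeing $(2q-1,2p-1)\ne(0,0)$ so that orthogonality to this vector is a genuine one-dimensional constraint; the same computation covers uniformly the degenerate slopes obtained when $q=\frac12$ or $p=\frac12$.

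The main obstacle is the bounded-height step. Everything downstream is a routine marriage of the law of large numbers with planar trigonometry, but the uniform bound $|X_n+Y_m|\le c$ along the \emph{entire} infinite path is where the combinatorics of corner percolation, through Pete's height function, is indispensable. One must justify that $\Gamma$ really does possess a single finite level shared by all the black faces along it --- so that the bounded-height property propagates from the origin out to infinity --- and hence that the whole path is confined to the strip $\{(n,m):|X_n+Y_m|\le c\}$. Once this confinement is in hand, the asymptotic direction follows as above.
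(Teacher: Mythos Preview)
Your argument is correct and follows essentially the same strategy as the paper: both exploit that the height function is bounded along $\Gamma$ (the paper pins the level down to $0$ or $-1$ since $\Gamma_0=(0,0)$, you work with a generic bound $|X_n+Y_m|\le c$), feed in the law of large numbers for $(X_n)$ and $(Y_m)$, and deduce that the linear form $(2q-1)n+(2p-1)m$ is $o(|z|)$ along $\Gamma$, which is exactly the cone condition. Your packaging of the LLN as a uniform bound $|X_n-(2q-1)n|\le \delta|n|+M$ valid for all $n$ is a mild streamlining over the paper's polar-coordinate presentation, which splits into the cases $|\cos\theta|\le \varepsilon_0/4$ and $|\cos\theta|>\varepsilon_0/4$ to handle small coordinates separately.
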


This implies that if $\Gamma$ is simple, then $\Gamma$ has an asymptotic slope, which is $\frac{2q-1}{1-2p}$. By invariance by even translation, the same result holds for all bi-infinite paths.

\begin{figure}[h]
    \centering
    \begin{tikzpicture}[scale=0.3]
    
    \draw[black, thick, ->] (0,0) -- (0,5) node[anchor=west]{$y$};
    \draw[black, thick] (0,-5) -- (0,0);
    \draw[black, thick, ->] (-15,0) -- (15,0) node[anchor=west]{$x$};
    
    \fill[blue, thin] (0,0) -- (-14,5) -- (-14,2.5) -- (14,-2.5) -- (14,-5) -- cycle;
    
    \fill[red] circle(2);
    
    \filldraw[red] (0,0) circle (2pt) node[anchor=south east]{};
    
    \filldraw[blue] (-14,2.5) node[anchor=north]{$C_{p,q}^\varepsilon$};
    
    \filldraw[blue] (14,-2.2) node[anchor=south east]{$C_{p,q}^\varepsilon$};
    
    \filldraw[red] (-2,0) node[anchor=north east]{$D_R$};
        
    \end{tikzpicture}
    \caption{Representation of the set described in Theorem \ref{directionofpath}. The path $\Gamma$ is included in the colored area for some value of $R\in \mathbb{R}_+^*$.}
    \label{BoiteEtCone}
\end{figure}
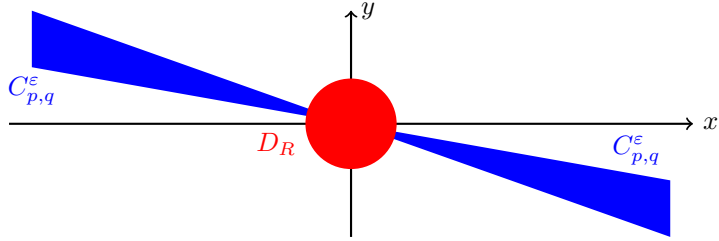

\begin{proof}
Note that $\Gamma$ has level $0$ or $-1$. This implies that the faces adjacent to $\Gamma$ have height $0,1$ or $-1$.  Using the asymptotic behaviour of the random walks, we will see that almost surely, these faces are all in a restricted part of the grid.

The key step to prove the theorem is to show that, for all $\varepsilon>0$ and $0<\varepsilon_0<\frac{|z_{p,q}|\varepsilon}{2}$, almost surely there exists $R_0\in \mathbb{R}^*_+$ such that, for all $\theta\in [0,2\pi[$ and $r\geq R_0$ satisfying $(r\cos(\theta),r\sin(\theta))\in \Z^2$,
\begin{align}
    \left | \frac{X_{r\cos(\theta)}+Y_{r\sin(\theta)}}{r}- \langle z_{p,q},e^{i\theta} \rangle_{\mathbb{R}^2}\right|\leq \varepsilon_0.\label{eq1}
\end{align}
Let us start by showing that $(\ref{eq1})$ implies the result. We set $\varepsilon>0$ and $R=\max\left(R_0,\frac{4}{|z_{p,q}|\varepsilon}\right)$. Let $(n,m)\in \Z^2\setminus D_R$ be such that $(n,m)\notin C_{p,q}^\varepsilon$. There exist $r\in \mathbb{R}^*_+$ and $\theta\in [0,2\pi[$ such that $(n,m)=(r\cos(\theta),r\sin(\theta))$, with $r>R$ and $\left |\langle e^{i\theta_{p,q}},e^{i\theta} \rangle_{\mathbb{R}^2}\right| > \varepsilon$. Then we have $\left|\langle z_{p,q},e^{i\theta} \rangle_{\mathbb{R}^2}\right| >|z_{p,q}|\varepsilon$, and
\begin{align*}
    \left |H\left(n+\frac{1}{2},m+\frac{1}{2}\right)\right|&\geq\frac{\left | X_{r\cos(\theta)}+Y_{r\sin(\theta)}\right|}{2}\\
    &\geq\frac{r}{2}\left|\; \left| \frac{\left| X_{r\cos(\theta)}+Y_{r\sin(\theta)}\right|}{r}-\langle z_{p,q},e^{i\theta} \rangle_{\mathbb{R}^2} \right|-\left|\langle z_{p,q},e^{i\theta} \rangle_{\mathbb{R}^2} \right| \; \right|\\
    &\geq \frac{r}{2}\left| \; \left|\langle z_{p,q},e^{i\theta} \rangle_{\mathbb{R}^2}\right| -\varepsilon_0\right|\\
    &\geq \frac{r}{2}\left(|z_{p,q}|\varepsilon-\varepsilon_0\right)\\
    &\geq \frac{r|z_{p,q}|\varepsilon}{4}>1,
\end{align*}
the fourth and fifth inequalities coming from the fact that $\frac{|z_{p,q}|\varepsilon}{2}>\varepsilon_0$. Since the level of~$\Gamma$ is $0$ or $-1$, then we deduce that $(n,m)\notin \Gamma$. Therefore, $\Gamma\subset D_R\cup C_{p,q}^\varepsilon$.

Now, let us prove (\ref{eq1}). By the law of large numbers, there exists $N\in \N^*$ such that for all $n\geq N$, 
\begin{align}
\max\left(\left|\frac{X_{n}}{n}-(2p-1)\right|, \left|\frac{X_{-n}}{n}-(1-2p)\right|, \left|\frac{Y_{ n}}{n}-(2q-1)\right|,  \left|\frac{Y_{ -n}}{n}-(1-2q)\right|\right)\leq \frac{\varepsilon_0}{2}. \label{eq2}
\end{align}
We set $R_0=\frac{4(N+1)}{\varepsilon_0}$. For $\theta\in [0,2\pi[$ and $r\geq R_0$ such that $(r\cos(\theta),r\sin(\theta))\in \Z^2$, we have
\begin{align}
&\left | \frac{X_{r\cos(\theta)}+Y_{r\sin(\theta)}}{r}-\left [(2p-1)\cos(\theta)+(2q-1)\sin(\theta)\right]\right|\notag \\
&\leq \left | \frac{X_{r\cos(\theta)}}{r}-(2p-1)\cos(\theta)\right|+\left | \frac{Y_{r\sin(\theta)}}{r}-(2q-1)\sin(\theta)\right|. \label{eq3}
\end{align}
We now bound the first term from above. We have two cases:

\medskip \noindent\underline{Case 1:} $|\cos(\theta)|\leq \frac{\varepsilon_0}{4}$. \\
In this case, we bound from above $X_{r\cos(\theta)}$ with the triangular inequality:
\begin{align*}
    \left| \frac{X_{ r\cos(\theta)}}{r}\right|&\leq |\cos(\theta)|.
\end{align*}
Since $|(2p-1)\cos(\theta)|\leq |\cos(\theta)|$, we have
\begin{align*}
    \left| \frac{X_{ r\cos(\theta)}}{r}- (2p-1)\cos(\theta)\right|&\leq 2\cos(\theta)\leq \frac{\varepsilon_0}{2}.
\end{align*}

\medskip \noindent\underline{Case 2:} $|\cos(\theta)|>\frac{\varepsilon_0}{4}$. \\
This time we have $r|\cos(\theta)| \geq N$, so by \eqref{eq2}, we have
$$\left|\frac{X_{r\cos(\theta)}}{r\cos(\theta)}-(2p-1)\right|\leq \frac{\varepsilon_0}{2}.$$
Then we have
\begin{align*}
    \left| \frac{X_{ r\cos(\theta)}}{r}- (2p-1)\cos(\theta)\right|
    &\leq \left|\frac{X_{ r\cos(\theta)}}{r\cos(\theta) }\cos(\theta)- (2p-1)\cos(\theta)\right|\\
    &\leq |\cos(\theta)|\left| \frac{X_{ r\cos(\theta)}}{r\cos(\theta) }-(2p-1)\right| \leq \frac{\varepsilon_0}{2}.
\end{align*}
Working in the same manner for the second term in \eqref{eq3}, we obtain \eqref{eq1}.
\end{proof}

\begin{remark} \label{Rebroussement}
Remember that $\Gamma$ is a bi-infinite path. We define the \emph{forward path} $\Gamma^+$ (resp. the \emph{backward path} $\Gamma^-$) by
$$\Gamma^+=(\Gamma_n)_{n\in \N} \qquad \text{(resp. }\Gamma^-=(\Gamma_{-n})_{n\in \N}).$$
Note that Theorem \ref{directionofpath} implies that, conditionally on the event $\{\Gamma \text{ is simple}\}$, we have
$$\lim\limits_{n\to +\infty} \frac{\Gamma^+_n}{|\Gamma^+_n|}\in \{e^{i\Tilde{\theta}_{p,q}},e^{-i\Tilde{\theta}_{p,q}}\}\text{\quad and\quad} \lim\limits_{n\to +\infty} \frac{\Gamma^-_n}{|\Gamma^-_n|}\in \{e^{i\Tilde{\theta}_{p,q}},e^{-i\Tilde{\theta}_{p,q}}\},$$
where $\Tilde{\theta}_{p,q}=\theta_{p,q}+\frac{\pi}{2}$ (modulo $2\pi$).

In the proof of Theorem \ref{existinftypath}, we proved that with positive probability, we have 
$$\lim\limits_{n\to +\infty} \frac{\Gamma^+_n}{|\Gamma^+_n|}\ne\lim\limits_{n\to +\infty} \frac{\Gamma^-_n}{|\Gamma^-_n|}.$$
Indeed, on the event we constructed (see Figure \ref{Prooftheorem8}), vertices of $\Gamma^+$ (resp. $\Gamma^-$) are restricted to a certain area after a sufficient number of steps. 

Using Theorem $2$ of Burton and Keane \cite{Burton-Keane}, we can prove that in fact, forward and backward paths always have the same asymptotic direction. In other words, we have almost surely $\lim\limits_{n\to +\infty} \frac{\Gamma^+_n}{|\Gamma^+_n|}\ne\lim\limits_{n\to +\infty} \frac{\Gamma^-_n}{|\Gamma^-_n|}$. Indeed, Theorem $2$ of \cite{Burton-Keane} asserts that an infinite connected component $C$ has exactly one neighbor on each connected component of $\Z^2\setminus C$, ignoring finite cycles.
If we assume by contradiction that forward and backward paths may go to the same direction, then we deduce that an event of the type of the one of Figure \ref{Trifurcation} has positive probability, which leads to a contradiction.

\begin{figure}
    \centering
    \includegraphics[scale=0.5]{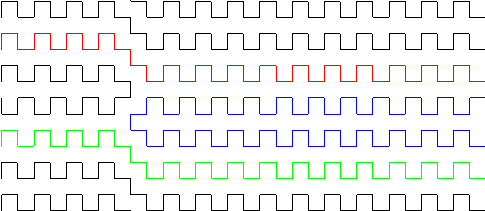}
    \caption{Example of a configuration containing a bi-infinite path whose forward and backward paths go to the same direction (the blue one). This path has two neighbors on the same side (the red one and the green one).}
    \label{Trifurcation}
\end{figure}


\end{remark}

\section{Questions and discussions}

\subsection{Level of infinite paths}


On Figure \ref{SimuEscaliers}, it seems that two distinct infinite connected components have different levels, and that each integer is the level of one infinite connected component. This leads to the following conjecture:

\begin{conjecture} The function which maps each infinite connected component to its level is bijective. 
Moreover, two infinite connected components are neighbors if and only if the difference of their level is $-1$ or $1$.
\end{conjecture}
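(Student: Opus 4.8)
The plan is to recast every connected component as an \emph{interface} of the height function and to reduce both assertions to a single statement about these interfaces. For $k\in\Z$ let $U_k=\{H\ge k\}$ be a super-level set of faces and let $\partial_k$ be the set of open edges separating a face of $U_k$ from a face of its complement. Since $X_n\equiv n$ and $Y_m\equiv m \pmod 2$, a black face $(n+\frac12,m+\frac12)$ satisfies $H=(X_n+Y_m)/2$ and a white face satisfies $H=(X_n+Y_m+1)/2$; hence two faces sharing an edge differ in height by $0$ or $1$, the edge being open exactly when the difference is $1$ (and closed exactly when their heights are equal). A short parity check then shows that along any component contained in $\partial_k$ the black faces all have height $k-1$ and the white faces all have height $k$: a black face of height $k$ cannot be adjacent across an open edge to a white face of height $k-1$, whereas a black face of height $k-1$ is adjacent to a white face of height $k$. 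Consequently every open edge lies in a unique $\partial_k$, every component is contained in a single $\partial_k$, and its \emph{level} (the height of its black faces) equals $k-1$. Thus the first part of the conjecture is \emph{equivalent} to: for every $k\in\Z$, the interface $\partial_k$ has exactly one infinite connected component, which in turn follows once one shows that each of $U_k$ and $U_k^{c}$ has a unique unbounded connected component.

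I would then feed in the quantitative control of Theorem \ref{directionofpath}. By inequality \eqref{eq1}, outside a cone of sublinear width around the line $\langle z_{p,q},\,\cdot\,\rangle=2k$ the sign of $H-k$ is prescribed by the sign of $(2p-1)n+(2q-1)m-2k$. Because $(p,q)\ne(\frac12,\frac12)$ means exactly $z_{p,q}\ne 0$, the two half-planes $\{(2p-1)n+(2q-1)m\gtrless 2k\}$ are genuine, nonempty and unbounded, so $U_k$ and $U_k^{c}$ each contain a deep connected half-plane; their common interface is therefore an infinite path. As $k$ ranges over $\Z$ this already produces an infinite component of every level, giving surjectivity of the level map. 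The heart of the matter is uniqueness, and I expect it to be the main obstacle: I must exclude a \emph{second} unbounded component of $U_k$ trapped inside the sublinear-width cone around the critical line, where the sign of $H-k$ is not controlled by the law of large numbers alone. Here I would exploit the additive structure $H=\lceil(X_n+Y_m)/2\rceil$ — for each fixed $m$ the slice $\{n:X_n\ge c-Y_m\}$ is a union of excursion intervals of a single walk — together with the planar separation/Burton--Keane argument already invoked in Remark \ref{Rebroussement}, which shows that an infinite component has exactly one neighbour on each side. Uniqueness of the unbounded components of $U_k$ and $U_k^c$ yields both a unique infinite interface in $\partial_k$ and the absence of gaps in the set of realised levels, completing the bijection.

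Finally, the adjacency statement follows from the same interface picture once uniqueness is in hand. Two components are neighbours precisely when they are joined by a \emph{closed} edge, and a closed edge separates two faces of equal height. A component of level $\ell$ is the interface $\partial_{\ell+1}$ and borders only faces of heights $\ell$ and $\ell+1$; so if components of levels $\ell$ and $\ell'$ are neighbours then $\{\ell,\ell+1\}\cap\{\ell',\ell'+1\}\ne\emptyset$, i.e. $|\ell-\ell'|\le 1$. The case $\ell=\ell'$ is impossible, since by uniqueness a black face of height $\ell$ sits on the level side of \emph{the} single infinite level-$\ell$ path, so two distinct infinite components cannot share a level; this leaves $|\ell-\ell'|=1$. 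For the converse I would show that the unique infinite interfaces $\gamma_\ell$ and $\gamma_{\ell+1}$ are genuinely adjacent: by uniqueness the set of faces of height exactly $\ell+1$ forms a single infinite strip bounded by the white faces of $\gamma_\ell$ and the black faces of $\gamma_{\ell+1}$, and a closed edge joining the two paths appears wherever this strip pinches to width one. Establishing that such a pinch occurs almost surely (indeed infinitely often) is the one remaining technical point of this last step.
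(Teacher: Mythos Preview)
The paper does \emph{not} prove this statement: it is explicitly stated as a conjecture in Section~5.1, motivated only by the simulations of Figure~\ref{SimuEscaliers}. There is therefore no ``paper's own proof'' to compare your proposal against.

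As for your proposal itself, it is a reasonable strategy but not a proof, and you acknowledge this yourself at several places. The reformulation of components as interfaces $\partial_k$ of the super-level sets $U_k=\{H\ge k\}$ is correct and clarifying: the parity computation showing that an edge is open iff the adjacent faces differ in height by~$1$, and that the black faces along a component all have height $k-1$ when the component lies in $\partial_k$, is sound. Your surjectivity argument (every level is attained by some infinite component) is essentially complete once you quote inequality~\eqref{eq1}, and the ``only if'' direction of the adjacency claim is also essentially there.

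The genuine gap is exactly where you say it is: uniqueness of the infinite component of $\partial_k$ (equivalently, uniqueness of the unbounded components of $U_k$ and of $U_k^c$). Your suggestion to invoke Burton--Keane is natural, but note that $U_k$ is \emph{not} translation-invariant in the usual sense --- translating the configuration by an even vector shifts all heights by a constant, so it permutes the family $(U_k)_{k\in\Z}$ rather than fixing any individual $U_k$ --- and the finite-energy property has to be checked for the face-coloring $\mathds 1_{\{H\ge k\}}$, not for the edge process. So the Burton--Keane step would need a genuinely new argument (or a different uniqueness mechanism using the one-dimensional structure $H(n+\tfrac12,m+\tfrac12)=\lceil (X_n+Y_m)/2\rceil$, as you hint). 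Likewise, the ``pinch'' needed for the converse of the adjacency claim is left open. In short: your outline isolates the right difficulties, but resolving them would amount to proving the conjecture, which the paper does not do either.
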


\subsection{Ergodicity of the processes of the signs}

Remember that horizontal edges and vertical edges alternate along $\Gamma$, and the edges $E_{2n}=\{\Gamma_{2n},\Gamma_{2n+1}\}$ (resp. $E_{2n+1}=\{\Gamma_{2n+1},\Gamma_{2n+2}\}$), $n\in \mathbb Z$, are horizontal (resp. vertical). We define the sign of a horizontal (resp. vertical) edge by setting
$$\Gamma_{2n+1}-\Gamma_{2n}=-\sgn(E_{2n})e_1 \qquad \text{ (resp. } \Gamma_{2n+2}-\Gamma_{2n+1}=\sgn(E_{2n+1})e_2).$$
 With the help of Theorem \ref{Birkhoff}, we can prove the following:

\begin{proposition} \label{statio}
Let $p,q\in ]0,1[$. The process $(\sgn(E_{2n}))_{n\in \mathbb{Z}}$ (resp. $(\sgn(E_{2n+1}))_{n\in \mathbb{Z}}$) is \emph{stationary}, that is:
$$\mathbb{P}(\sgn(E_{t_1})=x_1,...,\sgn(E_{t_n})=x_n)=\mathbb{P}(\sgn(E_{t_1+k})=x_1,...,\sgn(E_{t_n+k})=x_n)$$
for all $n\in \mathbb{N}^*$, $x_1,x_2,...,x_n \in \{-1,1\}^n$,~ $k \in 2\mathbb{Z}$ ~and~ $t_1<t_2<...<t_n$ in  $2\mathbb{Z}$ (resp. in $2\mathbb{Z}+1$).
In particular, we have 
$$\forall n\in \Z,\quad \mathbb{E}(\sgn(E_{2n}))= 2p-1 \quad \text{and}\quad \mathbb{E}(\sgn(E_{2n+1}))= 2q-1.$$ 
\end{proposition}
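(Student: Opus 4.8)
The plan is to reduce the stationarity of the edge-sign processes to the translation-invariance of $\P$ under even translations, which is already available from Remark \ref{symmetry}(1), combined with the ergodic structure developed in Theorem \ref{Birkhoff}. The essential idea is that the path $\Gamma$ and the labelling of its edges are defined intrinsically from the random graph $G$, and that an even translation $\tau_v$ sends the path through the origin to a path through $v$, shifting the edge-indexing by a fixed amount. First I would make precise how the edge indices transform: the orientation convention for $\Gamma$ (choosing $\Gamma_1$ so that $\{\Gamma_0,\Gamma_1\}$ is horizontal and the highest adjacent face lies on the right) is canonical and hence commutes with even translations up to a shift in the index $n$. Concretely, if a vertex $\Gamma_{2n_0}$ of the path through the origin happens to equal the origin under the shifted environment $\tau_v^{-1}G$, then the edge $E_{2n}$ in the translated configuration corresponds to $E_{2n+2n_0}$ in the original one, where the shift $2n_0$ is an even integer.

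The cleaner route, which I expect to be the cleanest to write, avoids tracking the path-vertex matched to the origin and instead uses that the distribution of the pair $(\Gamma, \sgn(E_\bullet))$ is a measurable functional of $G$ that is equivariant under even translations. I would argue as follows. Fix $k\in 2\Z$ and consider the even translation that shifts the path index by exactly $k$; since horizontal edges $E_{2n}$ sit at the horizontal steps of $\Gamma$, a shift of the base point by the appropriate lattice vector (an even translation, as $k$ is even) relabels $E_{2n}\mapsto E_{2n+k}$ while preserving the sign of each edge, because the sign is defined by the geometric direction of the step ($\Gamma_{2n+1}-\Gamma_{2n}=-\sgn(E_{2n})e_1$) and translations preserve directions of edges. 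Then the finite-dimensional distribution $\P(\sgn(E_{t_1})=x_1,\dots,\sgn(E_{t_n})=x_n)$ equals its shift by $k$ because $\P$ itself is invariant under the relevant even translation. The same argument applies verbatim to the vertical edges $E_{2n+1}$, with indices in $2\Z+1$ and the roles of $e_1,e_2$ interchanged.

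For the last two identities, I would use the second part of Theorem \ref{Birkhoff} together with stationarity. Stationarity gives that $\E(\sgn(E_{2n}))$ is a constant independent of $n$; call it $c_h$. To evaluate $c_h$, I would relate the spatial average of horizontal-edge signs along $\Gamma$ to the displacement of $\Gamma$, or more directly interpret $\sgn(E_{2n})$ in terms of the increments $\xi,\eta$ of the two random walks. The horizontal step direction at a corner is governed by the orientation of the horizontal road being crossed, whose bias is $p$ (West with probability $p$), so that the expected horizontal signed displacement per horizontal edge is $2p-1$; symmetrically the vertical bias $q$ gives $2q-1$. The main obstacle, and the step requiring genuine care, is this identification of $c_h$ with $2p-1$: one must justify that the sign $\sgn(E_{2n})$ has marginal expectation equal to the road-orientation bias, which requires unwinding the deterministic-walk description and confirming that, under $\P^{p,q}$, the conditional law of the direction taken at each horizontal edge of $\Gamma$ is precisely the $(p,1-p)$ road orientation. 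I would carry this out by checking it on the finite configurations determined by the height function (as in the proof of Theorem \ref{existinftypath}) and then passing to expectations, using that the marginal of a single edge sign is determined by the marginal law of the corresponding road's orientation. The stationarity established above then guarantees this common value is $n$-independent, completing the proof.
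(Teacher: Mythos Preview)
The paper does not actually give a proof of this proposition; it only says ``With the help of Theorem~\ref{Birkhoff}, we can prove the following'', so there is nothing to compare against directly. I will therefore assess your argument on its own.

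Your stationarity argument has a genuine gap. You write ``fix $k\in 2\Z$ and consider the even translation that shifts the path index by exactly $k$'', but no such \emph{fixed} translation exists: the vector you must translate by is $-\Gamma_k(G)$, which is random. Invariance of $\P$ under a deterministic $\tau_v$ does not immediately give invariance under the configuration-dependent map $T(G)=\tau_{-\Gamma_2(G)}(G)$, and it is precisely the $\P$-invariance of this ``shift along the path'' map that you need (since $\sgn(E_{2n})\circ T=\sgn(E_{2(n+1)})$). The standard way to close this gap is to partition according to the value of $\Gamma_2$: for each $v\in\Z^2_e$, on $\{\Gamma_2=v\}$ the map $T$ coincides with $\tau_{-v}$, and one checks that $T$ sends $\{\Gamma_2=v\}$ bijectively onto $\{\Gamma_{-2}=-v\}$; then
\[
\P(T^{-1}A)=\sum_{v\in\Z^2_e}\P\bigl(\Gamma_2=v,\ \tau_{-v}(G)\in A\bigr)
=\sum_{v\in\Z^2_e}\P\bigl(\Gamma_{-2}=-v,\ G\in A\bigr)=\P(A),
\]
using even-translation invariance in the middle step. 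Your first paragraph gestures toward this idea, but your ``cleaner route'' drops it and asserts something false.

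For the expectations you are working much harder than necessary. With the orientation convention of the paper ($\{\Gamma_0,\Gamma_1\}$ horizontal), $\Gamma_1$ is simply the unique horizontal neighbour of the origin, which is $(-1,0)$ or $(1,0)$ according to whether $\eta(0)=1$ or $\eta(0)=-1$. Unwinding the definition $\Gamma_1-\Gamma_0=-\sgn(E_0)e_1$ gives $\sgn(E_0)=\eta(0)$ exactly, hence $\E[\sgn(E_0)]=2p-1$ with no further work; stationarity then propagates this to all $E_{2n}$. A similarly short computation (conditioning on $\Gamma_1\in\{(\pm1,0)\}$ and using independence of $\xi$ and $\eta$) gives $\E[\sgn(E_1)]=2q-1$. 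There is no need to invoke the height function or Birkhoff for this part.
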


If $p\ne \frac12$ and $q \ne \frac12$, bi-infinite simple paths have an asymptotic direction by Theorem~\ref{directionofpath}. This implies that there exists a random integer $N$ such that $(\Gamma_n)_{n\geq N}$ does not visit any line or column visited by $(\Gamma_n)_{0\leq n\leq k}$. Lines and columns being independent, the dynamical system seems to have the mixing property, which would imply its ergodicity, but we did not manage to prove it. 

\begin{conjecture} \label{ConjErgo}
Assume that $p\ne \frac12$ and $q \ne \frac12$. Let us denote by $X$ a process among $(\sgn(E_{2n}))_{n\in \mathbb{Z}}$ and $(\sgn(E_{2n+1}))_{n\in \mathbb{Z}}$. Conditionally on the event ``$\Gamma$ is simple'', the process~$X$ is \emph{ergodic}, that is:
$(\{-1,1\}^\N,\mathbb{Q}_X,\theta)$ is ergodic, where
$\theta$ is the shift map, $\mathbb{Q}=\P(.|\Gamma \text{ is simple)}$ and $\mathbb{Q}_X$ is the law of $X$ under $\mathbb{Q}$.
\end{conjecture}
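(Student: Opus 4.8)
The plan is to reduce the sign process to an i.i.d.\ environment sampled along a walk, to reduce ergodicity to a decoupling statement, and to obtain the decoupling from a regeneration structure; the genuine difficulty, and the reason the statement is only a conjecture, lies in constructing the regeneration times.

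\medskip\noindent\textbf{Step 1: an explicit formula for the signs.} Since $\Gamma_0=(0,0)\in\Z^2_e$ and horizontal and vertical edges alternate along $\Gamma$, one has $\Gamma_{2n}\in\Z^2_e$ for all $n$. Writing $\Gamma_{2n}=(C_n,L_n)$, the unique open horizontal edge at a point of $\Z^2_e$ lying on the line $y=L_n$ goes West precisely when $\eta(L_n)=1$; comparing with the sign convention, this gives $\sgn(E_{2n})=\eta(L_n)$, and similarly $\sgn(E_{2n+1})=\xi(C_{n+1})$, where $C_{n+1}=C_n-\eta(L_n)$ and $L_{n+1}=L_n+\xi(C_{n+1})$ (both identities are consistent with the expectations $2p-1$ and $2q-1$ of Proposition \ref{statio}). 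Thus $(C_n,L_n)_n$ is a deterministic walk in the i.i.d.\ environment $(\xi,\eta)$ — reading $\eta$ on its current line and $\xi$ on its current column — and each sign process is exactly this environment sampled along the walk. Using the reflection symmetries of Remark \ref{symmetry}, it suffices to treat $X=(\sgn(E_{2n}))_{n\ge0}=(\eta(L_n))_{n\ge0}$ in the regime $p<\frac12<q$; there the asymptotic direction of Theorem \ref{directionofpath} points into an open quadrant, so on $\{\Gamma\text{ simple}\}$ we may assume $L_n\to+\infty$ and $C_n\to+\infty$ at linear speed as $n\to+\infty$.

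\medskip\noindent\textbf{Step 2: reduction to a decoupling statement.} The event $\{\Gamma\text{ simple}\}$ is invariant under re-rooting the path by two edges, hence is a shift-invariant event for the sign sequence; since $\theta$ preserves $\P$ (Proposition \ref{statio} with $k=2$), it preserves $\mathbb{Q}=\P(\cdot\mid\Gamma\text{ simple})$, so $\mathbb{Q}_X$ is $\theta$-invariant and the system is well defined. Exactly as in the proof of Proposition \ref{ergo}, it then suffices to show that the shift-invariant $\sigma$-algebra of $(\{-1,1\}^\N,\mathbb{Q}_X,\theta)$ is trivial; I would obtain this by establishing that $X$ decomposes, under $\mathbb{Q}$, into independent and (after the first) identically distributed blocks.

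\medskip\noindent\textbf{Step 3: decoupling through cut times.} A coordinate $\sgn(E_{2n})=\eta(L_n)$ only records the orientation of the line $L_n$, and by the linear speed the lines visited around time $0$ and around a large time $N$ lie in disjoint, far-apart families. The mechanism turning this spatial separation into probabilistic independence is a \emph{cut time}: a time $\tau$ such that the lines $\{L_n:n<\tau\}$ and $\{L_n:n\ge\tau\}$ are disjoint and the columns $\{C_n:n<\tau\}$ and $\{C_n:n\ge\tau\}$ are disjoint, i.e.\ after $\tau$ the walk never re-reads a line or column seen before $\tau$. At such a $\tau$ the future of the walk is a deterministic function of the orientations of the lines $\ge L_\tau$ and of the columns $\ge C_\tau$ alone; these are fresh, and by independence of distinct lines and columns they are independent of everything read before $\tau$. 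Together with invariance under even translations, this makes the pieces of the sign sequence between consecutive cut times independent and, after the first, identically distributed, with $\{\Gamma\text{ simple}\}$ being the event that infinitely many cut times occur. A stationary process built from such i.i.d.\ regeneration blocks of integrable length has trivial invariant $\sigma$-algebra (its invariant events are measurable with respect to the tail of the i.i.d.\ block sequence), which yields the ergodicity of $X$ under $\mathbb{Q}$.

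\medskip\noindent\textbf{The main obstacle.} Everything rests on Step 3, that is, on proving that cut times exist and occur with positive asymptotic density on $\{\Gamma\text{ simple}\}$. The difficulty is that $(C_n,L_n)_n$ is a walk \emph{with memory}: each time it returns to a previously visited line or column it re-reads the same orientation, so it is not a Markov chain, and the classical regeneration constructions for random walks in i.i.d.\ environments do not apply. One is forced to exploit the specific geometry of the model — in particular the representation $H(n+\frac12,m+\frac12)=\lceil\frac{X_n+Y_m}{2}\rceil$ expressing $\Gamma$ as a contour line of the sum of two independent one-dimensional walks $X$ and $Y$ — in order to control \emph{simultaneous} never-undercut records of $L_n$ and $C_n$ and to bound the gaps between successive cut times, all while respecting the global conditioning on $\{\Gamma\text{ simple}\}$. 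Making this quantitative is exactly the delicate point, and is the reason the statement remains only a conjecture.
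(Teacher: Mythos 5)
You are attempting to prove a statement that the paper itself leaves open: this is Conjecture \ref{ConjErgo}, and the authors explicitly write that the system ``seems to have the mixing property, which would imply its ergodicity, but we did not manage to prove it.'' So there is no proof in the paper to compare against, and your submission does not close the gap either --- by your own admission, Step 3 is a program rather than an argument. To your credit, Steps 1 and 2 are sound: the identities $\sgn(E_{2n})=\eta(L_n)$ and $\sgn(E_{2n+1})=\xi(C_{n+1})$ with the recursion $C_{n+1}=C_n-\eta(L_n)$, $L_{n+1}=L_n+\xi(C_{n+1})$ check out against the construction of $X(\omega)$ (using that $\Gamma_{2n}\in\Z^2_e$) and are consistent with Proposition \ref{statio}; and the reduction of ergodicity to an i.i.d.\ regeneration-block structure is a standard and correct route, essentially a regeneration reformulation of the mixing heuristic the paper sketches (the forward path eventually abandoning all lines and columns met by any initial segment). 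Your diagnosis of where the difficulty sits --- the walk re-reads orientations upon revisiting a line or column, so it has unbounded memory and classical RWRE regeneration does not apply --- agrees with the paper's discussion.

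Beyond the acknowledged hole, two points in your sketch are worse than you indicate. First, there is a circularity: in Steps 1 and 3 you ``assume $L_n\to+\infty$ and $C_n\to+\infty$ at linear speed.'' Theorem \ref{directionofpath} gives only cone confinement, hence an asymptotic direction, not ballisticity in the time parametrization; and since $C_n-C_0=-\sum_{k=0}^{n-1}\sgn(E_{2k})$, linear speed is equivalent to the convergence of precisely the ergodic averages that Conjecture \ref{ConjErgo} is supposed to deliver (the paper lists $\frac1n\sum_{i=0}^{n-1}\sgn(E_{2i})\to\frac{2p-1}{\P(A)}\mathds{1}_A$ as a \emph{consequence} of the conjecture). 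So linear speed cannot be an input to your decoupling. Second, even granting cut times exist, your block structure is not obviously i.i.d.: the event ``$\tau$ is a cut time'' depends on the entire future of the walk, so consecutive blocks are correlated through the conditioning that defines them (this is exactly the point that makes Sznitman--Zerner-type constructions delicate, and here one must additionally carry the global conditioning on $\{\Gamma\text{ simple}\}$, which is not a local event); and your identification of $\{\Gamma\text{ simple}\}$ with ``infinitely many cut times occur'' is asserted, not argued --- simpleness follows from infinitely many cut times, but the converse implication, along with positive density and integrability of block lengths, is the entire content of the open problem. In short: a reasonable research plan, faithful to the paper's own heuristic, but not a proof, and with one hidden circular assumption you should remove before pursuing it.
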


We denote by $A$ the event ``$\Gamma \text{ is simple}$''. This conjecture would imply that
$$\lim\limits_{n\to +\infty}  \frac{1}{n}\sum\limits_{i=0}^{n-1}\sgn(E_{2i})=\frac{2p-1}{\P(A)}\mathds{1}_{A}\text{\quad and\quad} \lim\limits_{n\to +\infty}  \frac{1}{n}\sum\limits_{i=0}^{n-1}\sgn(E_{2i+1})=\frac{2q-1}{\P(A)}\mathds{1}_{A},$$
and the same result for the backward path. We would also obtain the following improvement of Remark \ref{Rebroussement}:
$$\P_{A}\left(\lim\limits_{n\to +\infty} \frac{\Gamma_n^+}{|\Gamma_n^+|}=e^{i\Tilde{\theta}_{p,q}} \text{\quad and\quad}\lim\limits_{n\to +\infty} \frac{\Gamma_n^-}{|\Gamma_n^-|}=e^{-i\Tilde{\theta}_{p,q}}\right)=1.$$

\subsection{Distribution of the sequences $\xi$ and $\eta$}

We assumed the sequences $\xi$ and $\eta$ to be i.i.d. But in fact, we only used the three following properties:
\begin{itemize}
    \item the corner percolation is ergodic for the translations of Proposition \ref{ergo};
    \item one of the limits of $(\frac{X_n}{n})_{n\in \N}$ and $(\frac{Y_n}{n})_{n\in \N}$ is non-zero;
    \item $\mu^{p,q}$ has a finite energy property.
\end{itemize}
So we could take more general distributions for $\xi$ and $\eta$, as long as these properties are still satisfied. In particular, all the results could be extended to two independent ergodic stationary Markov chains on $\{-1,1\}$. 





\end{document}